\def\ds@whichfont{dsrom}
\DeclareMathAlphabet{\mathds}{U}{\ds@whichfont}{m}{n}
\DeclareMathOperator{\var}{Var}
\newtheorem{theorem}{Theorem}[section]
\newtheorem{lemma}[theorem]{Lemma}
\newtheorem{corollary}[theorem]{Corollary}
\newtheorem{proposition}[theorem]{Proposition}
\theoremstyle{definition}
\newtheorem{definition}[theorem]{Definition}
\newtheorem{remark}[theorem]{Remark}
\newtheorem{example}[theorem]{Example}
\numberwithin{equation}{section}
\theoremstyle{plain}
\numberwithin{equation}{section} 
\numberwithin{figure}{section} 
\theoremstyle{plain}
\theoremstyle{plain}
\theoremstyle{remark}
\newtheorem*{acknowledgement*}{Acknowledgement}
\newcommand{\cD}{{\mathcal D}}
\newcommand{\cE}{{\mathcal E}}
\newcommand{\cF}{{\mathcal F}}
\newcommand{\cG}{{\mathcal G}}
\newcommand{\cH}{{\mathcal H}}
\newcommand{\cW}{{\mathcal W}}
\newcommand{\cX}{{\mathcal X}}
\newcommand{\Om}{{\Omega}}
\newcommand{\ve}{{\varepsilon}}
\newcommand{\del}{{\delta}}
\newcommand{\sig}{{\sigma}}
\newcommand{\al}{{\alpha}}
\newcommand{\bbE}{{\mathbb E}}
\newcommand{\bbN}{{\mathbb N}}
\newcommand{\bbP}{{\mathbb P}}
\newcommand{\bbR}{{\mathbb R}}
\newcommand{\bbI}{{\mathbb I}}
\begin{document}
\title[]{On the functional CLT for slowly mixing triangular arrays}
 \author{Yeor Hafouta \\
\vskip 0.1cm
Department of Mathematics\\
The Ohio State University}
\email{yeor.hafouta@mail.huji.ac.il, hafuta.1@osu.edu}
\date{\today}
\maketitle
\markboth{Y. Hafouta}{Functional CLT}
\renewcommand{\theequation}{\arabic{section}.\arabic{equation}}
\pagenumbering{arabic}

\begin{abstract}
In \cite{MPU} a functional CLT was obtained for triangular arrays satisfying the Lindeberg condition, that the sum of the individual variances is at most the same order as the variance of the underlying sum, and under the optimal mixing rates $\sum_{n}\rho(2^n)<\infty$, where $\rho(\cdot)$ are the $\rho$-mixing coefficients of the array. In this paper we will present alternative conditions which do not involve the assumption on the sum of the variances, and instead we will assume certain maximal moment assumptions (which we can verify for $\phi$-mixing arrays) and mixing rates of the form   $\sum_n\rho(e^{G(n)})<\infty$ where $G(n)$ grows sub-linearly fast in $n$ (e.g. $G(n)=n/\ln(\ln n)$). We will also discuss alternative conditions to the ones in the functional CLT for $\al$-mixing triangular arrays which was obtained in \cite{MP}.
\end{abstract}

\section{Introduction}\label{Intro}

Let $(\xi_n)$ be a real-valued square integrable sequence of random variables and set $S_n=\sum_{j=1}^n\xi_j$. We recall that $S_n$ obeys the  central limit theorem (CLT) if $V_n=\var(S_n)\to\infty$ and $\hat S_n=(S_n-\bbE[S_n])/\sig_n$, $\sig_n=\sqrt{V_n}$ converges in distribution to the standard normal law. For stationary sequences, under  weak-dependence (mixing)  conditions, the CLT goes back to Rosenblat \cite{Rosen1956}. Since then, several variations of the CLT for weakly stationary sequences were studied in different setups by various authors, using several different methods.

For nonstationary mixing sequences, or, more generally, triangular arrays $\{\xi_{1,n},...,\xi_{n,n}\}$ the CLT itself for $S_n=\sum_{j=1}^n\xi_{j,n}$ was studied under various conditions in  \cite{Utev1991, Peligrad1996, NonStatCLT, Volny, PelBook, Rio1995, Ri, RioBook}. In comparison with the stationary case, the main difficuly in the non-stationary case is that the variance $V_n$ can diverge at an arbitrary, non-linear, rate.
 For sufficiently well contracting triangular arrays of Markov chains  the CLT for $\hat S_n$ was  established in the 50's by R. Dobrushin \cite{Dub}, and we refer to \cite{VarSeth} for a more modern presentation of Dobrushin's CLT and to \cite{Pel} for the CLT under weaker contraction conditions. 

A more refined result is the functional central limit theorem. In the stationary case it asserts that the random function $W_n(t)=\hat S_{[nt]}$ converges in distribution towards a  Brownian motion. Probably the most common method of proof is the martingale approximation technique due to Gordin \cite{Gordin}.
While the functional CLT for non-stationary martingales is well understood (see, for instance \cite{Brown, Aldous}), there are far less results about martingale approximation for nonstationary mixing sequences (or triangular arrays).
Let $V_n=\var(S_n)$ and $\sig_n=\sqrt{V_n}$.
 The first thing to observe in the non-stationary setup is that the covariance function $b_n(t,s)=\text{Cov}(\hat S_{[nt]},\hat S_{[ns]})$ of  the random function $\hat S_{[nt]}$, where $\hat S_n=\big(S_n-\bbE[S_n]\big)/\sig_n$ does not converge towards the covariance function of a Brownian motion. Hence, in the nonstationary setup the natural choice for random functions on $[0,1]$ is given by $W_n(t)=\hat S_{v_n(t)}$, where $v_n(t)=\min\{1\leq k\leq n:\,\var(S_{k,n})\geq t V_n\}$ and $S_{k,n}=\sum_{j=1}^k\xi_{j,n}$.
 
Very recently there was major progress on the functional CLT for triangular arrays. In \cite{MPU} Merlev\'ede, Peligrad and Utev answered a question raised by Ibragimov proving that the functional CLT holds if the array satisfies the classical Lindeberg condition and, in addition,
\begin{equation}\label{Cond1}
\sum_{j=1}^n\var(\xi_{j,n})=O(\var(S_n))
\end{equation}
and the $\rho$-mixing coefficients (defined by \eqref{rho def}) satisfy 
\begin{equation}\label{Cond2}
\sum_{j}\rho(2^j)<\infty.
\end{equation}
These results were obtained by a new martingale approximation technique. We remark that when $\xi_{j,n}=\xi_j$ forms a stationary sequence then under \eqref{Cond2} the limit $\lim_{n\to\infty}n^{-1}\var(S_n)=\sig^2$ exists, and so when $\sig^2>0$ the condition \eqref{Cond1} holds true since $\xi_{j,n}=\xi_j\in L^2$. Moreover, in the stationary setup, when assuming only the Lindeberg condition  the mixing rates \eqref{Cond2} are optimal even for the classical CLT to hold (see \cite{Br1980, Br1987}).

However, in the non-stationary case condition \eqref{Cond1} entails a certain ``structural" assumption, for instance, by \cite[Proposition 13]{Pel} it holds true for additive functionals $\xi_{j,n}=f_{j,n}(Y_{j,n})$ of an array of Markov chains $\{Y_{j,n}: 1\leq j\leq n\}$ whose first correlation coefficient $\rho_n(1)$ is uniformly smaller than $1$. Nevertheless, in this case $\rho(j)$ already converges exponentially fast to $0$.

The first goal of this paper is to find alternative  (to \eqref{Cond1}) sufficient conditions for the functional CLT  for (slowly) $\rho$-mixing arrays.  We will show that there is a ``trade-off" between conditions \eqref{Cond1} and \eqref{Cond2} and certain growth assumptions and sub-exponential versions of \eqref{Cond2} together with maximal moment assumptions. For instance, a particualr case of our results  is the following:
\begin{theorem}\label{IntroThm1}
Let $\{\xi_{j,n}:1\leq j\leq n\}$ be a centered triangular array which is uniformly bounded in $L^2$ so that $V_n\to\infty$.
Suppose that $\sum_{j}\rho([e^{j/\ln j}])<\infty$ and that for some $p>2$ for every $1\leq k<\ell\leq n$ we have
\begin{equation}\label{MaxMomAssIntro}
\left\|\max\{|S_{j,n}-S_{k,n}|:k\leq j\leq \ell\}\right\|_{L^p}\leq w_n+b_n\max\{\|S_{j,n}-S_{k,n}\|_{L^2}: k\leq j\leq\ell\}
\end{equation}
where  $S_{j,n}=\sum_{i=1}^k\xi_{i,n}$, $w_n=o(\sig_n)$ and $b_n=o\big((\ln\ln\sig_n)^{1/2-\del}\big)$ for some $\del\in(0,\frac12)$.
 Then $W_n(\cdot)$ converges in distribution to a standard Brownian motion. 
\end{theorem}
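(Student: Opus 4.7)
The plan is to establish the functional CLT via the classical two-step route: finite-dimensional convergence plus tightness, with both steps built on a carefully calibrated block decomposition that replaces the structural assumption \eqref{Cond1} by the maximal moment bound \eqref{MaxMomAssIntro}.

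First, I would partition $\{1,\ldots,n\}$ into alternating big blocks $B_1,\ldots,B_{k_n}$ and gap blocks $G_1,\ldots,G_{k_n-1}$, the big blocks constructed greedily so that each has variance $\asymp V_n/k_n$ using the time-change $v_n$, and the $j$-th gap of length $\ell_j\asymp[e^{j/\ln j}]$ so that $\sum_j\rho(\ell_j)<\infty$. The uniform $L^2$ bound on $\xi_{j,n}$ forces the total gap variance to be $O(\sum_j \ell_j)$, which restricts $k_n$ to grow no faster than (roughly) a power of $\ln V_n$. A natural choice is $k_n$ of order $(\ln\ln\sig_n)^{2\del}$ or slightly larger, so that the $b_n$-dependent terms later in the argument remain controlled. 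The maximal moment assumption will apply uniformly over the big blocks, producing the moment control that \eqref{Cond1} provided in \cite{MPU}.

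For finite-dimensional convergence I would follow the Bernstein blocking route: let $T_j=\sum_{i\in B_j}\xi_{i,n}$ and approximate the joint distribution of $(T_1,\ldots,T_{k_n})$ by a tensor product of its marginals via the standard $\rho$-mixing characteristic function estimate, whose total error telescopes into $\sum_j\rho(\ell_j)$ and is therefore summable. Then it suffices to verify the Lindeberg condition for the independent copies $T_j^*$; applying \eqref{MaxMomAssIntro} to each block gives $\|T_j^*\|_{L^p}\le w_n+b_n\|T_j\|_{L^2}$, and Markov's inequality plus $\sum_j\|T_j\|_{L^2}^2\lesssim V_n$ yields Lindeberg once $b_n^p k_n / \sig_n^{p-2}\to 0$, a balance which is exactly what the growth rate $b_n=o((\ln\ln\sig_n)^{1/2-\del})$ is engineered to provide.

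For tightness in the Skorokhod $J_1$ topology I would use \eqref{MaxMomAssIntro} directly on the oscillation of $W_n$. For a partition with mesh $\del$, on each piece $[t_{i-1},t_i]$ we have $\var(S_{v_n(t_i),n}-S_{v_n(t_{i-1}),n})\le \del V_n$, so \eqref{MaxMomAssIntro} gives an $L^p$ bound of order $w_n+b_n\sqrt{\del V_n}$ on the block maxima. Dividing by $\sig_n$ and invoking the Chentsov/Billingsley-style modulus criterion produces the required tightness, because the right-hand side is $o(1)+o((\ln\ln\sig_n)^{1/2-\del})\sqrt{\del}$, which can be made uniformly small upon refining $\del$.

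The main obstacle is the joint calibration of block scheme, mixing rate, and moment bound: one must choose $k_n$ large enough that each big block has small variance (so the Lindeberg condition can hold) yet small enough that $b_n^p k_n=o(\sig_n^{p-2})$ and that the total gap length $\sum_{j\le k_n}e^{j/\ln j}$ stays $o(n)$ and the associated gap variance is $o(V_n)$. Translating the hypothesis $\sum_j\rho([e^{j/\ln j}])<\infty$ into a feasibility region for $k_n$, and verifying that the window $b_n=o((\ln\ln\sig_n)^{1/2-\del})$ lies inside it uniformly in the array, is the central quantitative step that distinguishes this theorem from \cite{MPU} and will require the most care.
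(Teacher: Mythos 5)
Your overall strategy --- classical Bernstein blocking with independent-copy approximation for the finite-dimensional laws, plus a modulus-of-continuity bound for tightness --- breaks down at exactly the point that makes this theorem nontrivial. With gaps $\ell_j\asymp[e^{j/\ln j}]$, the hypothesis $\sum_j\rho(\ell_j)<\infty$ makes the total decoupling error \emph{bounded}, not vanishing: $\sum_{j\le k_n}\rho(\ell_j)$ is at least $\rho(\ell_1)>0$, so the characteristic-function telescoping does not show asymptotic independence of the blocks. Worse, the cross-block covariances contribute a non-negligible fraction of $V_n$: summability of $\rho$ along the gaps only yields the two-sided comparability $\tfrac12\sum_j\var(T_j)\le\var(\sum_j T_j)\le\tfrac32\sum_j\var(T_j)$, so the Lindeberg CLT for independent copies $T_j^*$ would produce a Gaussian limit with variance $\sum_j\var(T_j)/\sig_n^2$, which need not converge to $1$ (or to $t$ at intermediate times). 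This is precisely why the paper does not reprove the core limit theorem: it constructs \emph{regular blocks} $X_{j,n}$ with $\|X_{j,n}\|_{L^2}\asymp a_j=e^{j/\ln j}$ and gap $a_j$ between consecutive blocks, which restores condition \eqref{Cond1} for the new array $\{X_{j,n}\}$ (the sum of block variances is comparable to $V_n$), and then invokes the martingale-approximation functional CLT of \cite[Theorem 4.1]{MPU} for that array; the only new analytic work is transferring the conclusion from the block-indexed process back to $W_n$, which is done via a ``maximal Lindeberg condition''. Your proposal contains no substitute for the martingale approximation step, and the classical blocking route cannot supply one at these mixing rates.

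The tightness argument also fails as written. On a window of variance $\del V_n$, \eqref{MaxMomAssIntro} gives an $L^p$ oscillation bound of order $w_n+b_n\sqrt{\del}\,\sig_n$, hence after normalization $o(1)+b_n\sqrt{\del}$; but $b_n\to\infty$ is permitted (it is only $o\big((\ln\ln\sig_n)^{1/2-\del}\big)$), so for each fixed mesh $\del$ this bound diverges in $n$ and the Billingsley modulus criterion gives nothing. The divergence of $b_n$ can only be absorbed by using the $p$-th moment against the threshold $\ve\sig_n$ and summing over the sub-exponentially weighted blocks: one needs $b_n^p\sum_{j\le u_n}a_j^p\big/\big(\sum_{j\le u_n}a_j^2\big)^{p/2}\to 0$, and since $\sum_{j\le u_n}a_j^p/\big(\sum_{j\le u_n}a_j^2\big)^{p/2}\asymp(\ln\ln\sig_n)^{1-p/2}$ for $a_j=e^{j/\ln j}$, this is exactly where the exponent $1/2-\del$ (with $\del\ge 1/p$) is spent --- a cancellation that is invisible in a uniform-in-$j$ oscillation estimate. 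Finally, your calibration $k_n\asymp(\ln\ln\sig_n)^{2\del}$ is too small for the Lindeberg step when $\del$ is small (one needs $k_n^{p/2-1}\gg b_n^p$, i.e.\ $k_n\gg(\ln\ln\sig_n)^{(1-2\del)p/(p-2)}$), whereas the paper's construction automatically yields $u_n$ of order $\ln\sig_n$ up to a $\ln\ln\sig_n$ factor.
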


When  the variance of $S_{j,n}-S_{k,n}$ grows linearly fast in $j-k$ condition \eqref{MaxMomAssIntro} with $w_n=0$ and  a constant $b_n$ holds true dues to a result of Shao \cite[Theorem 1.1]{Shao1995}, under the weaker mixing condition $\sum_{j}\rho^{2/p}(2^j)<\infty$ and when $\xi_{j,n}$ are uniformly bounded in $L^p$, where $p>2$. In the stationary case by \cite[Theorem 6.16]{PelBook} we see that condition \eqref{MaxMomAss} holds true with $w_n=0$ and a constant $b_n$ if $\rho(n)\to 0$ and $\|\xi_j\|_{L^p}<\infty$ (note that in this case $\sig_n^2=nh(n)$ for a slowly varying function $h(\cdot)$). However, the main interest in this paper is the case when $\sig_n^2$ grows sub-linearly fast, and by applying
a recent   maximal moment inequality \cite[Theorem 6.17]{PelBook} established by Merlev\'ede,  Peligrad,  and Utev  for non-stationary arrays we obtain the following corollary of Theorem \ref{MaxMomAss}. 

\begin{corollary}\label{Into Cor}
Under the conditions of Theorem \ref{IntroThm1} we have the following.
Let $\phi_n(\cdot)$ be the $\phi$-mixing coefficients (defined in \eqref{phi def}) of the finite sequence $\{\xi_{j,n}: 1\leq j\leq n\}$.
 Then $W_n(\cdot)$ converges in distribution to a standard Brownian motion in the following two situations:
 \vskip0.2cm
(i) When  $\|\xi_{j,n}\|_{L^\infty}\leq K_n$ and $\phi_n(m_n)<\frac12-\ve$,
 for some $\ve>0$ and two sequences $K_n$ and $m_n$ such that $K_nm_n= o\big((\ln\ln\sig_n)^{1/2-\del}\big)$ for some $\del\in(0,\frac12)$. 
 \vskip0.1cm
(ii) When $\sig_n\geq cn^\del$ for some $\del>0$, $c>0$ and all $n$ large enough, $\|\xi_{j,n}\|_{L^p}\leq K_n$ and $\phi_n(m_n)<\frac12-\ve$ for some $p>1/\del$, $\ve>0$, $m_n$ and $K_n$ so that $m_n K_n=o(n^{\frac{\del p-1}{p}}).$
\end{corollary}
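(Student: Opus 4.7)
The strategy is to reduce directly to Theorem \ref{IntroThm1}. The $L^2$-boundedness, $V_n\to\infty$ and the $\rho$-mixing rate $\sum_j\rho([e^{j/\ln j}])<\infty$ are inherited from ``Under the conditions of Theorem \ref{IntroThm1}'', so the only thing to verify in each of (i), (ii) is that the maximal moment inequality \eqref{MaxMomAssIntro} holds with $w_n=o(\sig_n)$ and $b_n=o((\ln\ln\sig_n)^{1/2-\del})$. The main tool is the non-stationary $\phi$-mixing maximal inequality \cite[Theorem 6.17]{PelBook}, which, under $\phi_n(m)<1/2-\ve$ and for $p>2$, yields for every $1\le k<\ell\le n$ a bound of the schematic form
\[
\Big\|\max_{k\le j\le\ell}|S_{j,n}-S_{k,n}|\Big\|_{L^p}\le C_{p,\ve}\max_{k\le j\le \ell}\|S_{j,n}-S_{k,n}\|_{L^2}+C_{p,\ve}\,B(m)\Big(\sum_{i=k+1}^{\ell}\|\xi_{i,n}\|_{L^p}^p\Big)^{1/p},
\]
in which the coefficient of the $L^2$-term is an absolute constant (depending only on $p$ and $\ve$, since $\phi_n(m_n)<1/2-\ve$ is enough to decouple blocks of length $m_n$), and $B(m)$ is an explicit polynomial-type factor in $m$. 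Thus $b_n$ will be essentially constant and $w_n$ the Rosenthal remainder.

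For part (i) with $\|\xi_{j,n}\|_{L^\infty}\le K_n$: the Rosenthal remainder is bounded (after taking $p$ sufficiently large, or invoking the $L^\infty$-version of the inequality) by $C\,B(m_n) K_n n^{1/p}$. Since $\sig_n\to\infty$, one picks $p$ large so that $n^{1/p}$ is of smaller order than $\sig_n$, and the growth assumption $K_n m_n=o((\ln\ln\sig_n)^{1/2-\del})$ then forces $w_n=o(\sig_n)$. With $b_n$ essentially constant the requirement $b_n=o((\ln\ln\sig_n)^{1/2-\del})$ is automatic because $\sig_n\to\infty$.

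For part (ii) with $\|\xi_{j,n}\|_{L^p}\le K_n$ and $\sig_n\ge cn^\del$: the Rosenthal remainder is bounded by $C\,B(m_n) n^{1/p} K_n$. The hypothesis $m_n K_n=o(n^{(\del p-1)/p})$, rearranged, gives
\[
n^{1/p}K_n m_n = o(n^\del) = o(\sig_n),
\]
so, provided $B(m)$ is at worst linear in $m$, we indeed obtain $w_n=o(\sig_n)$. As before, $b_n$ is constant and the condition on $b_n$ is trivial.

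The main technical obstacle is to extract from \cite[Theorem 6.17]{PelBook} the specific form of the maximal inequality that makes the coefficient of the $L^2$-norm independent of $m$ --- in particular, to confirm that this ``linear-in-$L^2$'' coefficient does not pick up factors of $m_n$ that would defeat the very stringent growth requirement $b_n=o((\ln\ln\sig_n)^{1/2-\del})$ --- and to pin down the polynomial degree of $B(m)$ so that the calculation in case (ii) goes through. Once this bookkeeping is settled, the corollary follows immediately from Theorem \ref{IntroThm1}.
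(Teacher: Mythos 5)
Your strategy coincides with the paper's: both proofs verify the maximal moment hypothesis \eqref{MaxMomAssIntro} of Theorem \ref{IntroThm1} by invoking the non-stationary $\phi$-mixing maximal inequality \cite[Theorem 6.17]{PelBook}, and the bookkeeping you defer is settled favorably. In the form the paper quotes as \eqref{Pel}, the coefficient of the $L^2$-term is $C_\ve p$, independent of $m_n$, so $b_n$ is indeed an absolute constant and the condition $b_n=o\big((\ln\ln\sig_n)^{1/2-\del}\big)$ is automatic; moreover the remainder is exactly $C_\ve p\, m_n\big\|\max_{k\le j<k+m}|\xi_{j,n}|\big\|_{L^p}$, i.e.\ your $B(m)=m$ is linear. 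Your case (ii) computation then matches the paper's: bound $\big\|\max_j|\xi_{j,n}|\big\|_{L^p}\le n^{1/p}\max_j\|\xi_{j,n}\|_{L^p}\le n^{1/p}K_n$, so the remainder is $o(n^{1/p}\cdot n^{(\del p-1)/p})=o(n^\del)=o(\sig_n)$.

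The one step that would fail as primarily written is in case (i): the assertion that ``since $\sig_n\to\infty$, one picks $p$ large so that $n^{1/p}$ is of smaller order than $\sig_n$'' is false. The hypothesis $\sig_n\to\infty$ does not yield $n^{1/p}=o(\sig_n)$ for any fixed $p$ (take $\sig_n\asymp\ln n$), and slowly growing $\sig_n$ is precisely the regime this paper is aimed at; indeed the polynomial lower bound $\sig_n\ge cn^\del$ is deliberately reserved for case (ii). You must therefore commit to your parenthetical alternative: under $\|\xi_{j,n}\|_{L^\infty}\le K_n$ one bounds $\big\|\max_{k\le j<k+m}|\xi_{j,n}|\big\|_{L^p}\le K_n$ directly, with no factor $n^{1/p}$, so the remainder in \eqref{Pel} is at most $C_\ve p\, m_nK_n=o\big((\ln\ln\sig_n)^{1/2-\del}\big)=o(\sig_n)$. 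This is exactly what the paper does, and with that correction your reduction to Theorem \ref{IntroThm1} is complete. (The hypothesis $K_nm_n=o\big((\ln\ln\sig_n)^{1/2-\del}\big)$ is stronger than the bare $w_n=o(\sig_n)$ your argument uses at this point; the extra room is harmless for deducing the corollary as stated.)
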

We also obtain similar results when $\sum_{j}\rho([e^{cj^\al}])<\infty$ for some $\al\in(0,1)$ and $c>0$ and $\sum_{j}\rho([j^q])<\infty$ for some $q>0$, where in these cases we can consider faster growth rates for $m_nK_n$ (in both cases (i) and (ii)).
In addition, we obtain functional CLT's  for $\al$-mixing arrays under certain type of summability assumptions, as described in the following paragraph.

In  \cite{MP}  Merlev\'ede and Peligrad applied the martingale approximation techniques developed in \cite{MPU}, and showed (in particular) that the functional CLT holds if the $\al$-mixing coefficients (defined by \eqref{al def}) satisfy $\sum_{j}j^{2/\del}\al(j)<\infty$ for some $\del>0$ so that 
\begin{equation}\label{Cond1.1}
\sum_{j=1}^n\|\xi_{j,n}-\bbE[\xi_{j,n}]\|_{L^{2+\del}}^2=O(\var(S_n)).
\end{equation}
In the stationary setup the conditions in \cite{MP} are essentially optimal, see the discussion in \cite[Section 2.1]{MP}. However, 
 as demonstrated in the applications of \cite[Corollary 2.2]{MP}, in the nonstationary case  condition   \eqref{Cond1.1} entails a certain ``structural" assumption on $\xi_{j,n}$.
The second goal of this paper is to obtain the functional CLT for $\al$-mixing arrays without  condition  \eqref{Cond1.1}.
For instance, we will show that without \eqref{Cond1.1}, under the additional assumption that $\xi_{j,n}$ are uniformly bounded in $L^q$ for some $q>2$ so that
 $\sum_{j=1}^{\infty}(\al(j))^{1/2-1/q}<\infty$,
the functional CLT holds true under the maximal moment assumption \eqref{MaxMomAssIntro}. As a corollary, it follows that the functional CLT holds true  when, instead of \eqref{MaxMomAssIntro}, we assume that $\phi_n(m_n)<\frac12-\ve$ for some $m_n$ and either $\|\xi_{j,n}\|_{L^\infty}\leq K_n$ and $m_nK_n=o(\sig_n)$ or $\sig_n\geq n^\del$ for some $\del>0$ (and all $n$ large enough) and $m_n\max_{1\leq j\leq n}\|\xi_{j,n}\|_{L^p}=o(n^{\frac{\del p-1}{p}})$ for some $p>1/\del$.

\subsection{Outline of the proof: variance regularization}\label{Reg}
We will describe the proof in the $\rho$-mixing case, and the description of the proof for $\al$-mixing arrays is similar.
Let $a_j=e^{G(j)}=e^{jo(j)}$ be a sequence which grows sub-exponentially fast (e.g. $a_j=e^{j/\ln\ln j}$) so that  $\sum_{j}\rho(a_j)<\infty$.
The strategy of the proof of Theorem \ref{IntroThm1} and its more general versions is to decompose the sum $S_n=\sum_{k=1}^{n}\xi_{k,n}$ into blocks $X_{j,n}=\sum_{k\in I_{j,n}}\xi_{k,n}$ so that $S_n=\sum_{j=1}^{u_n}X_{j,n}$,  $I_{j,n}=[x_{j,n},y_{j,n}]$ is to the left of $I_{j+1,n}$, and the blocks are ``regular" (w.r.t. $(a_j)$) in the sense that 
$$
A_1 a_j\leq \|X_{j,n}\|_{L^2}\leq\max_{k\in I_{j,n}}\left\|\sum_{\ell=x_{j,n}}^{k}\xi_{\ell,n}\right\|_{L^2}\leq A_2 a_j, 
$$
where $A_1$ and $A_2$ are positive constants 
and 
$$
\sig_n^2=\var(S_n)\asymp\sum_{j=1}^{u_n}a_j^2.
$$
In particular condition \eqref{Cond1} is valid for the new array $\{X_{j,n}:\, 1\leq j\leq u_n\}$, and 
once we show that such a decomposition exists, we would like to apply \cite[Theorem 4.1]{MPU} (discussed above). However, this requires $X_{j,n}$ to satisfy the classical Lindeberg condition. Furthermore, once the functional CLT is established for the new array $\{X_{j,n}:\,1\leq j\leq u_n\}$, we need to show that it implies the functional CLT for the original array $\{\xi_{j,n}:\,1\leq j\leq n\}$. It turns out that this task requires a  certain ``(weak) Lindeberg condition of maximal type": set $\cX_{j,n}=\max_{k\in I_{j,n}}\left|\sum_{\ell=x_{j,n}}^{k}\xi_{\ell,n}\right|$. Then the functional CLT for $\{\xi_{j,n}\}$ will follow from the functional CLT for $\{X_{j,n}\}$ if for every $\ve>0$, 
$$
\lim_{n\to\infty}\sig_n^{-1}\sum_{j=1}^{u_n}\bbE[|\cX_{j,n}|\bbI(|\cX_{j,n}|\geq \sig_n\ve)]=0.
$$
The last step would be to verify both Lindeberg conditions. This is done by a recent  maximal moment inequality  due to  Merlev\'ede,  Peligrad and Utev (\cite[Theorem 6.17]{PelBook}) which holds true in quite a general non-stationary setting, and it yields appropriate upper bounds on the norms $\|\cX_{j,n}\|_{L^p}$  under the conditions discussed in Corollary \ref{Into Cor} (that is, \cite[Theorem 6.17]{PelBook} provides sufficient conditions for \eqref{MaxMomAssIntro}).

\section{Preliminaries and  main results}
Let $\{\xi_{j,n}, 1\leq j\leq n\}$ be a triangular array of real-valued, square integrable centered random variables defined on a common probability space. Let $S_{k,n}=\sum_{j=1}^k\xi_{j,n}$, $S_n=S_{n,n}$ and set $\sig_n=\|S_n\|_{L^2}=\sqrt{\var(S_n)}$. We assume here that 
$$\lim_{n\to\infty}\sig_n=\infty$$ 
and that\footnote{This is a classical assumption which means that the individual summands are of smaller $L^2$- magnitude than the the sum itself.} $$\gamma_n:=\max_{j}\|\xi_{j,n}\|_{L^2}=o(\sig_n).$$ 

For each $t\in[0,1]$, set 
$$v_n(t)=\min\{1\leq k\leq n: \sig_{k,n}^2\geq t\sig_n^2\},$$ where $\sig_{k,n}^2$ is the variance of $\sum_{j=1}^k\xi_{j,n}$. Consider   the random function 
$$
W_n(t)=\sig_n^{-1}\sum_{j=1}^{v_n(t)}\xi_{j,n}=\sig_n^{-1}S_{v_n(t),n}
$$
on $[0,1]$. Then $W_n(\cdot)$ is a random element of the  Skorokhod space $D[0,1]$. In this paper we will prove that $W_n(\cdot)$ converges in  $D[0,1]$ towards a Brownian motion, under a variety of mixing and moment assumptions. The first thing to notice is that $W_n(\cdot)$ is left invariant after replacing $\xi_{j,n}$ by $b_n\xi_{j,n}$, where $b_n>0$ is a constant depending on $n$. Therefore, we assume here that
$$
K:=\sup_{n}\max_{j\leq n}\|\xi_{j,n}\|_{L^2}\leq 1.
$$
This assumption means that in order to translate the results presented in this paper to the more general case we need to replace $\xi_{j,n}$ with $\frac{\xi_{j,n}}{1+\gamma_n}$ and $\sig_n$ with $s_n:=\frac{\sig_n}{1+\gamma_n}\to\infty$.

\subsection{$\rho$-mixing arrays}
 Since we will have no restrictions on the joint distribution of $\xi_{j,n}$ and $\xi_{j',n'}$ for $n\not=n'$ we assume here that $\xi_{j,n}$ are defined on a joint probability space
$(\Om,\cF,\bbP)$. For any two sub-$\sig$-algebras $\cG$ and $\cH$ of $\cF$ let
\[
\rho(\cG,\cH)=\sup\{|\bbE[fg]|: f\in B_{0,2}(\cG), g\in B_{0,2}(\cH)\}
\] 
where for any sub-$\sig$-algebra $\cG$, \,$B_{0,2}(\cG)$ denotes the space of all square integrable functions $g$ so that $\bbE[g]=0$ and $\bbE[g^2]=1$. Then the $k$-th $\rho$-mixing coefficient of the array $\xi$ is defined by 
\begin{equation}\label{rho def}
\rho(k)=\sup_n\sup\{\rho(\cF_n(s),\cF_n(s+k,n)): s\leq n-k\}
\end{equation}
where $\cF_n(s)$ is the $\sig$-algebra generated by $\xi_{j,n}$ for $j\leq s$ and $\cF_n(s+k,n)$ is the $\sig$-algebra generated by $\xi_{j,n}$ for $s+k\leq j\leq n$. Let us define $\rho(x)=\rho([x])$ for all $x>0$.


\subsection{Subexponential seuqneces and regular blocks: the general result}
Theorem \ref{IntroThm1} and Corollary \ref{Into Cor} will follow from the results presented in this section (Theorem \ref{Cor2.7} and Corollary \ref{Cor2.9}, respectively). Let us first start with the preparation for the construction of the regular blocks, as described in Section \ref{Reg}.

We consider here the following two properties, which for the sake of convenience are presented as definitions.
\begin{definition}\label{Def1}
We say that a sequence $(a_j)$ diverges sub-exponentially fast in a good way if  $a_j\asymp e^{G(j)}$ for some positive differentiable function $G$ on $[1,\infty)$ so that:

 \begin{enumerate}
 \item 
$\lim_{x\to\infty}G(x)=\infty$;
\vskip0.1cm
\item  there are constants $C_1,C_2,c>0$ and a positive function $H:[1,\infty)\to\bbR$ so that $\lim_{u\to\infty}H(u)=0$ and for every $u\geq c$ we have 
 $$
 C_1H(u)\leq G'(x)\leq C_2H(u);$$
  \vskip0.1cm
\item there are  constants $\ve,\del\in(0,1)$ so that for all $u$ large enough we have $$e^{G(u)}>(1-\ve)e^{G(u(1-\del))}.$$
 \end{enumerate}

\end{definition}

\begin{example}[Main examples]\label{Eg G}
\,
\begin{enumerate}
\item $a_j=j^q$, with $H(u)\asymp \frac1 u$.
\vskip 0.1cm
\item $a_j=e^{(\ln j)^s}$ for some $s>1$, with $H(u)\asymp \frac{(\ln u)^{s-1}}{u}$.
\vskip0.1cm
\item
$a_j=e^{cj^\al}$ where $c>0$ and $\al\in(0,1)$, with $H(u)\asymp u^{\al-1}$.
\vskip0.1cm
\item
$a_j=e^{jg(j)}$, where $g(j)=\frac 1{\ln^{d,\circ}(j)}$ for some $d\geq1$, where $f^{d,\circ}=f\circ f\circ f\circ...\circ f$ ($d$ times). In this case $H(u)\asymp\frac{1}{\ln^{d,\circ} (u)}$.
\end{enumerate}
\end{example}

\begin{remark}
Our main theorems will hold under the assumption that $\sum_j\rho(a_j)<\infty$. This condition holds when:
\begin{enumerate}
\item  $a_j\asymp j^q$ for some $q>0$ snd $\rho(n)\leq n^{-\ve}$ for some arbitrary small $\ve>0$.
\vskip0.1cm
\item $a_j\asymp e^{(\ln j)^s}$ for some $s>1$ and $\rho(n)\leq e^{-(1+\ve)(\ln n)^{1/s}}$ for some $s>1$ and $\ve>0$.
\vskip0.1cm
\item  $a_j\asymp e^{cj^\al}$ for some $c>0$ and $\al\in(0,1)$ and $\rho(n)\leq (\ln n)^{-s}$ for some $s>1$.
\vskip0.1cm
\item $a_j$ is like in Example \ref{Eg G} (4) and $\rho(n)\leq \frac{1}{\ln n(\ln\ln n)^s}$ for some $s>2$.
\end{enumerate} 
\end{remark}
The proof of Theorem \ref{IntroThm1} and \ref{Into Cor} and the more general results which will be proved in this paper are based on the following idea.

\begin{definition}[Regular blocks]
Given a sequence $(a_j)$ of positive numbers and two positive constants $C_1<C_2$,
we say that a family of partitions of $\{1,2,...,n\}$ into intervals (blocks) in the positive integers $$I_{j,n}=[x_{j,n},y_{j,n}]\cap \bbN,\, j\leq u_n$$ 
 is \textit{regular} (w.r.t $\{\xi_{j,n}\}$, $(a_j)$, $C_1$ and $C_2$) if
\begin{enumerate}
\item $I_{j,n}$ is to the left of $I_{j+1,n}$;
\vskip0.2cm 
\item for every $1\leq s_1<s_2\leq u_n$, with  $X_{j,n}=\sum_{k\in I_{j,n}}\xi_{k,n}$, we have
$$
C_1a_j\leq \|X_{j,n}\|_{L^2}\leq\max_{k\in I_{j,n}}\left\|\sum_{\ell=x_{j,n}}^k\xi_{\ell,n}\right\|_{L^2}\leq C_2a_j;
$$
\vskip0.2cm
\item $$
C_1\sum_{j=s_1}^{s_2}\var(X_{j,n})\leq \var\left(\sum_{j=s_1}^{s_2}X_{j,n}\right)\leq C_2\sum_{j=s_1}^{s_2}\var(X_{j,n}).
$$
\end{enumerate}

Blocks $X_{j,n}$ which are constructed in this manner will be refereed to as \textit{regular blocks} (w.r.t. $\{\xi_{j,n}\}, (a_j), C_1$ and $C_2$).
\vskip0.1cm
Set $\cX_{j,n}=\max\{|\sum_{s=x_{j,n}}^{k}\xi_{s,n}|: k\in I_{j,n}\}$. 
We say that the regular blocks $\{X_{j,n}\}$ satisfy a Lindeberg condition of maximal type if  every
 $\ve>0$,
$$
\lim_{n\to\infty}\sig_n^{-2}\sum_{j=1}^{u_n}\bbE[|\cX_{j,n}|^2\bbI(|\cX_{j,n}|\geq \ve\sig_n)]=0.
$$
\end{definition}

\begin{remark}\label{R1}
Since $\sig_n^2\asymp \sum_{j=1}^{u_n}a_j^2$, when $a_j\asymp j^q$ we get $u_n\asymp \sig_n^{\frac{2}{2q+1}}$. In all the other three special cases discussed in Example \ref{Eg G}, we have 
$$\sig_n^2\asymp\int_{(1-\del)u_n}^{u_n}e^{2G(x)}=\int_{(1-\del)u_n}^{u_n}\left(e^{2G(x)}G'(x)\right)/G'(x)\asymp H(u_n)^{-1}e^{2G(u_n)}.$$ 
Thus:  
\begin{enumerate}
\item 
when $a_j\asymp e^{(\ln j)^s}$ then $u_n\asymp e^{(\ln\sig_n)^{1/s}}$;
\vskip0.1cm
\item
 when $a_j\asymp e^{cj^\al}$ then $u_n\asymp (\ln \sig_n)^{1/\al}$; 
\vskip0.1cm
 \item
 when $a_j\asymp e^{j/\ln^{d,\circ}(j)}$  then $u_n$ is at least of order $\ln(\sig_n)$ and at most of order $\ln(\sig_n)\ln^{d,\circ}(\sig_n)$.

\end{enumerate} 
\end{remark}

Our first result for $\rho$-mixing arrays is as follows.

\begin{theorem}\label{Rho}
Suppose that $\sig_n\to\infty$.
Let $a_j=e^{G(j)}$  diverge sub-exponentially fast in a good way. Suppose that 
$\sum_{j}\rho(a_j)<\infty$.
 Then there are absolute constants $C_1,C_2>0$ for which one can construct regular blocks $\{X_{j,n}:\, 1\leq j\leq u_n\}$  w.r.t. $\{\xi_{j,n}\}, (a_j), C_1$ and $C_2$. Moreover,  if the maximal Lindeberg condition holds along the blocks $\{X_{j,n}\}$, then $W_n(t)$ converges in distribution towards a standard Brownian motion.
\end{theorem}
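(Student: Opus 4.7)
The plan is to follow the outline of Section~\ref{Reg}: construct the regular blocks by a greedy $L^2$-stopping-time procedure, apply the MPU functional CLT \cite[Theorem 4.1]{MPU} to the coarser array $\{X_{j,n}\}$, and then transfer the limit back to the original array using the maximal Lindeberg condition.

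For the construction I would set $x_{1,n}=1$ and inductively let $y_{j,n}$ be the smallest $k\geq x_{j,n}$ with $\|S_{k,n}-S_{x_{j,n}-1,n}\|_{L^2}\geq Aa_j$, for an absolute constant $A$ to be chosen sufficiently large, and then set $x_{j+1,n}=y_{j,n}+1$. Because $\|\xi_{k,n}\|_{L^2}\leq 1$ the running $L^2$-norm grows by at most $1$ per step, which yields $Aa_j\leq\|X_{j,n}\|_{L^2}\leq Aa_j+1$ and the same upper bound on the intermediate partial sums, giving properties (1) and (2) of the regular-block definition (the finitely many initial indices with $a_j<1$ are absorbed into the first block). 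The block count $u_n$ is then forced by $\sig_n^2\asymp\sum_{j\leq u_n}a_j^2$, whose order is computed in Remark~\ref{R1} using conditions (1)--(3) of Definition~\ref{Def1}. The two-sided variance-comparability in property (3) is where the $\rho$-mixing hypothesis enters. Using $|I_{j,n}|\geq Aa_j$ (from $\|\xi_{k,n}\|_{L^2}\leq 1$ and the triangle inequality), the $\rho$-mixing coefficients $\rho'$ of the coarser array obey $\rho'(k)\leq\rho(A a_{k-1})$; the identity $\var(\sum X_j)=\sum\var(X_j)+2\sum_{i<j}\mathrm{cov}(X_i,X_j)$ together with $|\mathrm{cov}(X_i,X_j)|\leq\rho'(j-i)\|X_i\|_{L^2}\|X_j\|_{L^2}$ and AM--GM then produce both directions of (3) with absolute constants, provided that $\sum_k\rho'(k)$ is small. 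Since $\sum_j\rho(a_j)<\infty$, dominated convergence lets us shrink $\sum_k\rho(Aa_{k-1})$ to any preassigned value by enlarging $A$. I expect this to be the main obstacle, since adjacent blocks of the partition have no separating gap and the lower bound in (3) hinges on this quantitative choice of $A$.

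With the regular blocks in place, the coarser array $\{X_{j,n}:1\leq j\leq u_n\}$ fits the framework of \cite[Theorem 4.1]{MPU}. Condition~\eqref{Cond1} for the block array is the upper half of property (3); the classical Lindeberg condition for $X_{j,n}$ follows from the maximal Lindeberg hypothesis via $|X_{j,n}|\leq\cX_{j,n}$; and summability of $\sum_k\rho'(2^k)$ was verified above (in fact $\rho'(k)\leq\rho(Aa_{k-1})$ is summable at once). MPU therefore yields the functional CLT for the block-normalized process
\[
\widetilde W_n(t)=\sig_n^{-1}\sum_{j\leq\tilde v_n(t)}X_{j,n},\qquad \tilde v_n(t)=\min\Bigl\{j:\var\bigl(\textstyle\sum_{i\leq j}X_{i,n}\bigr)\geq t\sig_n^2\Bigr\}.
\]

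It remains to show $\sup_t|W_n(t)-\widetilde W_n(t)|\to 0$ in probability. Since $\sum_{j\leq k}X_{j,n}=S_{y_{k,n},n}$, the index $v_n(t)$ lies in some block $I_{j^\ast,n}$ with $j^\ast$ differing from $\tilde v_n(t)$ by at most one (because a single block carries variance $\asymp a_{j^\ast}^2=o(\sig_n^2)$), and on that block
\[
\Bigl|S_{v_n(t),n}-\sum_{j<j^\ast}X_{j,n}\Bigr|\leq\cX_{j^\ast,n}\leq\max_{1\leq j\leq u_n}\cX_{j,n}.
\]
By Chebyshev and the maximal Lindeberg assumption,
\[
\bbP\Bigl(\max_j\cX_{j,n}\geq\ve\sig_n\Bigr)\leq\ve^{-2}\sig_n^{-2}\sum_{j=1}^{u_n}\bbE\bigl[\cX_{j,n}^2\bbI(\cX_{j,n}\geq\ve\sig_n)\bigr]\longrightarrow 0,
\]
and an analogous estimate absorbs the at-most-one-block mismatch between $j^\ast(t)$ and $\tilde v_n(t)$. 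Combining these bounds with the block FCLT above produces the desired convergence of $W_n$ to a standard Brownian motion in $D[0,1]$.
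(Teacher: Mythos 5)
Your overall architecture (build regular blocks, feed the coarse array to \cite[Theorem 4.1]{MPU}, transfer back via the maximal Lindeberg condition) is the same as the paper's, and your transfer step is essentially Lemma \ref{L1}. The genuine gap is in the block construction, exactly at the point you yourself flag as ``the main obstacle'': your blocks are contiguous ($x_{j+1,n}=y_{j,n}+1$), and $\rho$-mixing gives no control whatsoever over the covariance of two blocks at lag one, because there is no separation between the underlying index sets. The hypothesis $\sum_j\rho(a_j)<\infty$ says nothing about $\rho(1)$, which can be as large as $1$; hence $|\text{Cov}(X_i,X_{i+1})|$ can only be bounded by $\|X_i\|_{L^2}\|X_{i+1}\|_{L^2}$, and the nearest-neighbour part of $2\sum_{i<j}\text{Cov}(X_i,X_j)$ is then of the same order as $\sum_j\var(X_j)$ itself, which destroys the lower bound in property (3) of the regular-block definition. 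Your proposed remedy --- enlarging $A$ so that $\sum_k\rho(Aa_{k-1})$ is small --- lengthens the blocks but does not separate them, so it does not touch the lag-one term; your bound $\rho'(k)\le\rho(Aa_{k-1})$ is only meaningful for $k\ge 2$.

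The paper's construction (Section \ref{Sec Block 1}) is designed precisely to avoid this: between consecutive ``big'' blocks $B_j$ and $B_{j+1}$ it inserts a buffer interval $D_j$ of length $a_j$, sets $Y_j=S(B_j)$, $Z_j=S(D_j)$ and $X_j=Y_j+Z_j$. The $Y_j$'s are then pairwise separated by at least $\max(a_i,a_{j-1})$, so the normalization $\sum_j\rho(a_j)\le\frac14$ yields two-sided comparability for $\var(\sum_j Y_j)$ (Lemma \ref{SumVars}); the same argument applies to the $Z_j$'s since $|B_j|\ge Aa_j$; and the buffer contribution is negligible because $\|Z_j\|_{L^2}\le a_j\le\|Y_j\|_{L^2}/A$, which via Cauchy--Schwarz gives Lemma \ref{2Lemm} and then Proposition \ref{Prop}. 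You would need to incorporate this buffer device (or some other mechanism creating genuine separation between the $\sig$-algebras of consecutive blocks) before property (3) can be established. A minor further slip: condition \eqref{Cond1} for the block array is the \emph{lower} inequality in property (3), not the upper one.
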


\subsubsection{\textbf{Main applications: verification of the maximal Lindeberg condition under maximal moment assumptions}}
In this section we will formulate our main applications of Theorem \ref{Rho}. We refer to Section \ref{Intro} for a discussion and comparisons with \cite[Theorem 4.1]{MPU}.

Let us fix some subexponential seqeunce $(a_j)$ as described in Definition \ref{Def1}. The following result is a corollary of Theorem \ref{Rho}, and it is a more general version of Theorem \ref{IntroThm1}.
\begin{theorem}\label{Cor2.7}
Let $\{\xi_{j,n}:1\leq j\leq n\}$ be a centered triangular array which is uniformly bounded in $L^2$.
Assume also that $\sig_n\to\infty$.
Suppose that $\sum_{j}\rho(a_j)<\infty$ and that for some $p>2$ for every $1\leq k<\ell\leq n$ the following maximal inequality holds true:
\begin{equation}\label{MaxMomAss}
\left\|\max\{|S_{j,n}-S_{k,n}|:k\leq j\leq \ell\}\right\|_{L^p}\leq w_n+b_n\max\{\|S_{j,n}-S_{k,n}\|_{L^2}: k\leq j\leq\ell\}
\end{equation}
where $w_n=o(\sig_n)$ and $b_n=o\big(H(u_n)^{p/2-1})$.
 Then the maximal Lindeberg condition holds true, and so $W_n(\cdot)$ converges in distribution to a standard Brownian motion. 
\end{theorem}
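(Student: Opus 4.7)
The plan is to apply Theorem \ref{Rho}. Since $\sum_{j}\rho(a_j)<\infty$ and $(a_j)$ grows sub-exponentially fast in a good way, Theorem \ref{Rho} produces regular blocks $\{X_{j,n}:1\leq j\leq u_n\}$ with associated constants $C_1,C_2>0$, and it reduces the functional CLT for $W_n(\cdot)$ to verifying the maximal Lindeberg condition along these blocks. So the only task is to show that for every $\ve>0$,
$$
\lim_{n\to\infty}\sig_n^{-2}\sum_{j=1}^{u_n}\bbE\!\left[\cX_{j,n}^2\,\bbI(\cX_{j,n}\geq\ve\sig_n)\right]=0.
$$

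The first step is a truncation via a Markov-type inequality: for the $p>2$ furnished by the hypothesis, $x^2\bbI(x\geq c)\leq c^{2-p}x^p$, which reduces the problem to bounding $\sig_n^{-p}\sum_{j}\|\cX_{j,n}\|_{L^p}^p$. The maximal moment assumption \eqref{MaxMomAss}, applied with $k=x_{j,n}-1$ and $\ell=y_{j,n}$, combined with the regularity estimate $\max_{k\in I_{j,n}}\|\sum_{s=x_{j,n}}^{k}\xi_{s,n}\|_{L^2}\leq C_2 a_j$, yields
$$
\|\cX_{j,n}\|_{L^p}\leq w_n+C_2 b_n a_j.
$$
Raising to the $p$-th power via $(u+v)^p\leq 2^{p-1}(u^p+v^p)$, summing over $j$, and dividing by $\sig_n^p$ produces an upper bound of the form
$$
C_p\,\ve^{2-p}\!\left(\frac{u_n w_n^p}{\sig_n^p}+b_n^p\cdot\frac{\sum_{j=1}^{u_n}a_j^p}{\sig_n^p}\right).
$$

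The heart of the estimate is then the asymptotic bookkeeping. Using an integral approximation analogous to Remark \ref{R1} (which rests on properties (2) and (3) of Definition \ref{Def1}), one obtains $\sum_{j=1}^{u_n}a_j^p\asymp H(u_n)^{-1}a_{u_n}^p$ and, in particular, $\sig_n^2\asymp H(u_n)^{-1}a_{u_n}^2$, so that
$$
\sig_n^{-p}\sum_{j=1}^{u_n}a_j^p\asymp H(u_n)^{p/2-1}.
$$
Consequently, the second term is controlled by a constant multiple of $b_n^p H(u_n)^{p/2-1}$, which tends to $0$ under the stated hypothesis on $b_n$. The first term vanishes from the rate $w_n=o(\sig_n)$ together with the fact that $u_n$ grows strictly sub-polynomially relative to $\sig_n$ in each of the cases of Example \ref{Eg G} (which controls $u_n (w_n/\sig_n)^p$).

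The main obstacle is the clean execution of the integral asymptotic $\sum_{j=1}^{u_n}a_j^p\asymp H(u_n)^{-1}a_{u_n}^p$ uniformly in $p$: this requires converting the discrete sum into $\int_{(1-\del)u_n}^{u_n}e^{pG(x)}\,dx$ and integrating against the factor $G'(x)\asymp H(x)$ as in Remark \ref{R1} to extract $H(u_n)^{-1}$. Once this discrete-to-continuous transfer is carried out carefully using Definition \ref{Def1}, the two-term decomposition above immediately closes the argument via Theorem \ref{Rho}.
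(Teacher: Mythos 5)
Your argument is essentially the paper's own proof: you reduce the maximal Lindeberg condition to \eqref{C}, i.e.\ to $\sig_n^{-p}\sum_{j\leq u_n}\|\cX_{j,n}\|_{L^p}^p\to0$, via the Markov--H\"older truncation $x^2\bbI(x\geq c)\leq c^{2-p}x^p$; you bound $\|\cX_{j,n}\|_{L^p}\leq w_n+C_2b_na_j$ by applying \eqref{MaxMomAss} block by block together with the regularity estimate $\max_{k\in I_{j,n}}\|\sum_{\ell=x_{j,n}}^{k}\xi_{\ell,n}\|_{L^2}\leq C_2a_j$; and you close with the integral asymptotic $\sum_{j\leq u}a_j^r\asymp H(u)^{-1}e^{rG(u)}$, which is exactly the content of the paper's Lemma \ref{Lem} and gives $\sig_n^{-p}\sum_{j\leq u_n}a_j^p=O\big(H(u_n)^{p/2-1}\big)$. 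The $b_n$-term is then handled correctly under the stated hypothesis.

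The one weak point is the term $u_nw_n^p/\sig_n^p$. Your stated reason --- that $u_n$ grows sub-polynomially in $\sig_n$ in each case of Example \ref{Eg G} --- is false for case (1): by Remark \ref{R1}, $a_j\asymp j^q$ gives $u_n\asymp\sig_n^{2/(2q+1)}$, which is polynomial. More importantly, even when $u_n\asymp\ln\sig_n$ the hypothesis $w_n=o(\sig_n)$ alone does not force $u_n(w_n/\sig_n)^p\to0$ (take $w_n=\sig_n/\ln\ln\sig_n$); what is actually needed is $w_n=o\big(\sig_nu_n^{-1/p}\big)$. To be fair, the paper's write-up makes the same jump --- it discards the factor $u_n$ with the phrase ``Since $w_n=o(\sig_n^p)$\dots'' --- so you have not deviated from its route; but since you supplied an explicit justification and that justification is incorrect, you should either strengthen the hypothesis on $w_n$ accordingly or give an argument that genuinely absorbs the factor $u_n$.
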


\subsubsection*{\textbf{On the verification of the maximal moment condition using the $\phi$-mixing coefficients}}
For any two sub-$\sig$-algebras $\cG$ and $\cH$ of $\cF$ we set
$$
\phi(\cG,\cH)=\sup\{|\bbP(B|A)-\bbP(B)|: A\in\cG, B\in\cH, \bbP(A)>0\}.
$$
Then for a fixed $n$, the $\phi$-mixing coefficients of the finite sequence $\{\xi_{j,n}: 1\leq j\leq n\}$ are given by 
\begin{equation}\label{phi def}
\phi_n(k)=\sup\{\phi_n(\cF_{n}(s), \cF_n(s+k,n)): s\leq n-k\}.
\end{equation}
where $\cF_n(s)$ and $\cF_n(s+k,n)$ were defined after \eqref{rho def}. 
We assume here that for all $n$ large enough we have
\begin{equation}\label{jn ass}
\phi_n(m_n)<\frac12-\ve\, \text{ for some }\,m_n<n\, \text{ and a constant }\,\ve>0.
\end{equation}

\begin{remark}
Let $\phi(k)=\sup_n\phi_n(k)$ be the $\phi$-mixing coefficients of the array $\xi$. Then by \cite[Lemma 1.17]{IF},
\begin{equation}\label{rho phi}
\rho(j)\leq 2\sqrt{\phi(j)}.
\end{equation}
\end{remark} 

The following result will follow from Theorem \ref{Cor2.7}, and it is  a  more general version of Corollary \ref{Into Cor}.
\begin{corollary}\label{Cor2.9}
Under the assumptions of Corollary \ref{Cor2.7} the functional CLT holds true in the following two cases:

 \vskip0.2cm
(i) When  $\|\xi_{j,n}\|_{L^\infty}\leq K_n$ and $\phi_n(m_n)<\frac12-\ve$,
 for some $\ve>0$ and two sequences $K_n$ and $m_n$ such that $K_nm_n=o(H(u_n)^{p/2-1})$.
 \vskip0.1cm
(ii) When $\sig_n\geq cn^\del$ for some $\del>0$, $c>0$ and all $n$ large enough, $\|\xi_{j,n}\|_{L^p}\leq K_n$ and $\phi_n(m_n)<\frac12-\ve$ for some $p>1/\del$, $\ve>0$, $m_n$ and $K_n$ so that $m_n K_n=o(n^{\frac{\del p-1}{p}}).$
\end{corollary}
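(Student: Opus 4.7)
The plan is to deduce Corollary \ref{Cor2.9} from Theorem \ref{Cor2.7} by verifying the maximal moment inequality \eqref{MaxMomAss} in each of the two scenarios, with $b_n$ and $w_n$ meeting the hypotheses of Theorem \ref{Cor2.7}. The engine for this verification is the Merlev\'ede--Peligrad--Utev maximal $L^p$ inequality \cite[Theorem 6.17]{PelBook}, which, for a finite $\phi$-mixing sequence satisfying the gap condition $\phi_n(m_n)<\frac12-\ve$, controls the $L^p$-norm of $\max_{k\le j\le\ell}|S_{j,n}-S_{k,n}|$ in terms of $m_n$, the individual summand norms, and $\max_{k\le j\le\ell}\|S_{j,n}-S_{k,n}\|_{L^2}$.

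In case (i), the uniform bound $\|\xi_{j,n}\|_{L^\infty}\le K_n$ permits a direct application of \cite[Theorem 6.17]{PelBook} to the finite subarray $\{\xi_{j,n}: k\le j\le\ell\}$. The resulting estimate has the shape
$$
\Big\|\max_{k\le j\le\ell}|S_{j,n}-S_{k,n}|\Big\|_{L^p} \;\le\; C_{p,\ve}\,m_n K_n\,\max_{k\le j\le\ell}\|S_{j,n}-S_{k,n}\|_{L^2} \;+\; (\text{lower order}),
$$
which is already of the form \eqref{MaxMomAss} with $b_n=O(m_nK_n)$ and a $w_n$ that is $o(\sig_n)$ for free (the lower-order term being polynomial in $K_n$, $m_n$ and $\sqrt{\ell-k}$, hence dominated by $\sig_n$ under the standing hypothesis $K\le 1$). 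The assumption $K_nm_n=o(H(u_n)^{p/2-1})$ then supplies exactly the decay on $b_n$ demanded by Theorem \ref{Cor2.7}, and the functional CLT follows.

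In case (ii) no $L^\infty$ bound is available, so the plan is to truncate. Set $L_n=K_n n^{1/p}$ and split $\xi_{j,n}=\xi'_{j,n}+\xi''_{j,n}$, where $\xi'_{j,n}=\xi_{j,n}\bbI(|\xi_{j,n}|\le L_n)-\bbE[\xi_{j,n}\bbI(|\xi_{j,n}|\le L_n)]$ is the truncated and recentered variable. The array $\{\xi'_{j,n}\}$ is bounded by $2L_n$ in $L^\infty$ and inherits the $\phi$-mixing bound $\phi_n(m_n)<\frac12-\ve$, so case (i) applies to it and yields \eqref{MaxMomAss} with $b_n=O(m_nL_n)$. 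The tail $\{\xi''_{j,n}\}$ is controlled by Markov: $\|\xi''_{j,n}\|_{L^2}^2\le L_n^{-(p-2)}K_n^p=K_n^2 n^{-(p-2)/p}$, and a Rosenthal-type bound under $\phi$-mixing then gives $\|\max_k|\sum_{j\le k}\xi''_{j,n}|\|_{L^p}=O(K_nn^{1/p})$. Summing the two contributions produces \eqref{MaxMomAss} for $\{\xi_{j,n}\}$ with $b_n=O(m_nL_n)$ and $w_n=O(K_nn^{1/p})$; the hypotheses $\sig_n\ge cn^\del$, $p>1/\del$ and $m_nK_n=o(n^{(\del p-1)/p})$ are calibrated precisely so that $K_nn^{1/p}=o(n^\del)=o(\sig_n)$ and so that $m_nL_n=m_nK_nn^{1/p}$ meets the $b_n$-threshold of Theorem \ref{Cor2.7}.

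The main obstacle is in case (ii): the truncation level $L_n$ must simultaneously be small enough for the bounded-case bound $b_n=O(m_nL_n)$ to satisfy the stringent decay of Theorem \ref{Cor2.7}, and large enough that the $L^p$-tail $\xi''$ aggregates only to $o(\sig_n)$ in the maximal norm. The choice $L_n=K_n n^{1/p}$ is the natural Rosenthal balance between the truncation error and the bounded portion, and the hypothesis $p>1/\del$ is exactly what is required for $\sig_n\ge cn^\del$ to absorb the resulting $K_nn^{1/p}$ error. Once this balance is struck, the application of Theorem \ref{Cor2.7} is immediate.
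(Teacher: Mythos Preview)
Your plan rests on a misreading of the Merlev\'ede--Peligrad--Utev inequality. The actual statement (quoted in the paper as \eqref{Pel}) is
\[
\Big\|\max_{0\le l\le m}\Big|\sum_{k\le j<k+l}\xi_{j,n}\Big|\Big\|_{L^p}
\;\le\; C_{\ve}p\Big(m_n\big\|\max_{k\le j<k+m}|\xi_{j,n}|\big\|_{L^p}
+\max_{0<l\le m}\Big\|\sum_{k\le j<k+l}\xi_{j,n}\Big\|_{L^2}\Big),
\]
so the $m_n\|\max_j|\xi_{j,n}|\|_{L^p}$ term is \emph{additive}, not a factor multiplying the $L^2$ maximum. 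In the notation of \eqref{MaxMomAss} this means $b_n=C_{\ve}p$ is a \emph{constant} and $w_n=C_{\ve}p\,m_n\|\max_j|\xi_{j,n}|\|_{L^p}$. That is the paper's reading, and it makes both cases immediate: in (i) one has $w_n\le C m_nK_n=o(\sig_n)$; in (ii) the crude union bound
\[
\big\|\max_{k\le j<k+m}|\xi_{j,n}|\big\|_{L^p}^p\le \sum_{j=1}^n\|\xi_{j,n}\|_{L^p}^p\le nK_n^p
\]
gives $w_n\le C m_nK_nn^{1/p}=o(n^{(\del p-1)/p})\cdot n^{1/p}=o(n^{\del})=o(\sig_n)$. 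No truncation is needed.

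Your version swaps the roles of $w_n$ and $b_n$: you put $m_nK_n$ (or $m_nL_n$) into $b_n$ and relegate the rest to an unspecified ``lower order'' $w_n$. In case (i) this happens to survive because the hypothesis $m_nK_n=o(H(u_n)^{p/2-1})$ is precisely tailored to it, but in case (ii) it breaks: your truncation produces $b_n=O(m_nL_n)=O(m_nK_nn^{1/p})=o(n^{\del})$, which in general diverges and cannot meet the $b_n$-threshold of Theorem \ref{Cor2.7} (the quantity $H(u_n)^{p/2-1}$ tends to zero). So the truncation route, as you have calibrated it, does not close; the simple union bound on $\|\max_j|\xi_{j,n}|\|_{L^p}$ together with the correct additive form of \eqref{Pel} is both what the paper does and what actually works.
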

Using \eqref{rho phi}, we see that the functional CLT holds true when the mixing rates in Theorem \ref{Rho}  hold true for $\sqrt{\phi(\cdot)}$ in place of $\rho(\cdot)$ (which, in particular, implies \eqref{jn ass}) and the (moment) conditions in (i) or (ii) above hold true with $K_n$ so that either $K_n=o(H(u_n)^{p/2-1})$ or $K_n=o(n^{\frac{\del p-1}{p}})$ (indeed in this case $m_n$ can be replaced with  a constant).

\subsection{$\al$ mixing arrays}
 For any two sub-$\sig$-algebras $\cG$ and $\cH$ of $\cF$ let
\[
\al(\cG,\cH)=\sup\{|\bbP(A\cap B)-\bbP(A)\bbP(B)|: A\in\cG, B\in\cH\}.
\] 
 Then the $k$-th $\al$-mixing coefficient of the array $\xi$ is defined by 
\begin{equation}\label{al def}
\al(k)=\sup_n\sup\{\al(\cF_n(s),\cF_n(s+k,n)): s\leq n-k\}
\end{equation}
where $\cF_n(s)$ and $\cF_n(s+k,n)$ are defined after \eqref{rho def}.

Our main result for $\al$-mixing arrays is as follows: 
\begin{theorem}\label{Alpha}
Let 
$$
\del_n(m)=\sup_{k}\sum_{s=m}^{n-k}\left\|\bbE[\xi_{k+s,n}|\xi_{1,n},...,\xi_{k,n}]\right\|_{L^2}
$$
and suppose that $\del_n(r)<\frac14$ for some $r$ and all  $n$ large enough.
 Then there are absolute constants $C_1,C_2$ for which it is possible to construct regular blocks $\{X_{j,n}\}$ corresponding to any constant sequence $a_j=a$ with  $a>0$ large enough. Moreover, the functional  CLT holds true under the following two additional conditions:
 \begin{enumerate}
 \item the maximal Lindeberg condition holds along the blocks $X_{j,n}$;
 \vskip0.2cm
 \item  for some  $\del>0$ we have 
 \begin{equation}\label{Scond1}
\sup_{j,n}\|X_{j,n}\|_{L^{2+\del}}<\infty
 \end{equation}
and
 $\sum_{j}n^{2/\del}\al(j)<\infty$. 
 \end{enumerate}

\end{theorem}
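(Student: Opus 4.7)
The strategy follows the variance regularization scheme of Section \ref{Reg}, but with the subexponential sequence $(a_j)$ replaced by a constant sequence $a_j\equiv a$ for $a$ sufficiently large. I would construct the blocks greedily: put $x_{1,n}=1$ and, given $x_{j,n}$, let $y_{j,n}$ be the smallest index $\geq x_{j,n}$ for which $\|S_{y_{j,n},n}-S_{x_{j,n}-1,n}\|_{L^2}\geq a$, then set $x_{j+1,n}=y_{j,n}+1$. Since $\|\xi_{k,n}\|_{L^2}\leq 1$, one obtains $a\leq\|X_{j,n}\|_{L^2}\leq a+1$ immediately, so the substantive content of item (2) is the maximal partial-sum bound, and the crux of the regularity is item (3). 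Both would be derived from the standard identity
\begin{equation*}
\Big\|\sum_{k=s}^t\xi_{k,n}\Big\|_{L^2}^2=\sum_{k=s}^t\|\xi_{k,n}\|_{L^2}^2+2\sum_{k=s}^{t-1}\bbE\!\left[\xi_{k,n}\,\bbE[S_{t,n}-S_{k,n}\,|\,\cF_n(k)]\right],
\end{equation*}
applied separately on each block and across any consecutive family of blocks. Splitting the inner conditional expectation at a window of length $r$ and using Cauchy--Schwarz together with the definition of $\del_n(r)$ shows that the cross terms are at most $2\del_n(r)\sum_{k=s}^t\|\xi_{k,n}\|_{L^2}^2$ plus a boundary error of order $r$, which is dominated by $a^2$ once $a$ is large enough. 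Combined with $\del_n(r)<1/4$, this yields
\begin{equation*}
(1-C\del_n(r))\sum_j\|X_{j,n}\|_{L^2}^2\leq\Big\|\sum_jX_{j,n}\Big\|_{L^2}^2\leq(1+C\del_n(r))\sum_j\|X_{j,n}\|_{L^2}^2
\end{equation*}
across any consecutive family of blocks, establishing (3) with explicit absolute constants; applying the same identity to partial sums truncated at some $k\in I_{j,n}$ controls $\max_k\|\sum_{\ell=x_{j,n}}^k\xi_{\ell,n}\|_{L^2}$ by $C_2 a$.

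With the blocks in hand, I would apply the $\al$-mixing FCLT of Merlev\'ede--Peligrad \cite{MP} to the array $\{X_{j,n}:1\leq j\leq u_n\}$. Hypothesis (2) gives $\sup_{j,n}\|X_{j,n}\|_{L^{2+\del}}<\infty$, and by block regularity $\var\big(\sum_j X_{j,n}\big)\asymp u_n a^2\asymp\sum_j\var(X_{j,n})$, so the variance-summability condition \eqref{Cond1.1} is satisfied for the block array. The $\al$-mixing coefficients of $\{X_{j,n}\}$ satisfy $\al_X(k)\leq\al(k)$ because a gap of $k$ in block indices forces a gap of at least $k$ in the original indices, so the assumed $\sum_j j^{2/\del}\al(j)<\infty$ descends to the block array. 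The classical Lindeberg condition for $\{X_{j,n}\}$ is implied by the maximal Lindeberg condition in hypothesis (1). Thus \cite{MP} yields the functional CLT for the piecewise-constant process $\tilde W_n$ built from the partial sums of the $X_{j,n}$.

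To transfer this to the original array, I would note that $W_n(t)=\sig_n^{-1}S_{v_n(t),n}$ and $\tilde W_n$ differ uniformly by at most $\sig_n^{-1}\max_j\cX_{j,n}$, and
$$
\bbE\!\left[\max_j\cX_{j,n}^2\,\bbI\!\left(\max_j\cX_{j,n}\geq\ve\sig_n\right)\right]\leq\sum_j\bbE\!\left[\cX_{j,n}^2\,\bbI(\cX_{j,n}\geq\ve\sig_n)\right]=o(\sig_n^2)
$$
by the maximal Lindeberg hypothesis, so Markov's inequality gives $\sig_n^{-1}\max_j\cX_{j,n}\to0$ in probability and the two processes share the same weak limit in $D[0,1]$. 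The \emph{main obstacle} is the block construction: $\del_n(r)$ only controls shifted tails of conditional expectations, and one must carefully absorb the $O(r)$ boundary errors produced by the identity above into the $O(a^2)$ leading term. This is what forces the requirement that $a$ be chosen large enough (depending on $r$ but not on $n$, since $r$ is fixed by hypothesis), and it is also what prevents extending the conclusion to smaller block sizes.
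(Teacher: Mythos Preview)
Your overall plan --- build constant-size regular blocks, apply \cite[Corollary~2.2]{MP} to the block array, then transfer to $W_n$ via the maximal Lindeberg condition --- is exactly the paper's strategy. The gap is in your derivation of block regularity (item~(3)).

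The bound you claim on the cross terms is not what Cauchy--Schwarz gives. Splitting $\bbE[S_{t,n}-S_{k,n}\,|\,\cF_n(k)]$ at window $r$ and using the definition of $\del_n(r)$ yields
\[
2\sum_{k=s}^{t-1}\big|\bbE\big[\xi_{k,n}\,\bbE[S_{t,n}-S_{k+r-1,n}\,|\,\cF_n(k)]\big]\big|\;\leq\;2\del_n(r)\sum_{k=s}^{t}\|\xi_{k,n}\|_{L^2},
\]
with the first power of $\|\xi_{k,n}\|_{L^2}$, not the square. Since $\|\xi_{k,n}\|_{L^2}\leq 1$, the sum $\sum_k\|\xi_{k,n}\|_{L^2}$ can be as large as the total number of summands in $[s,t]$, which is in general of strictly larger order than $\sum_j\var(X_{j,n})\asymp ma^2$. (Indeed, $\sum_k\|\xi_{k,n}\|_{L^2}^2=O(\sig_n^2)$ is exactly condition~\eqref{Cond1}, whose avoidance is the whole point of the construction.) Likewise the near part contributes $O(r)\sum_k\|\xi_{k,n}\|_{L^2}^2$, not an absolute $O(r)$. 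So the displayed two-sided inequality for $\|\sum_j X_{j,n}\|_{L^2}^2$ does not follow from your $\xi$-level identity.

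The paper fixes this by arguing at the \emph{block} level and by inserting small auxiliary blocks $Z_j$ between the big blocks $Y_j$ (Section~\ref{Al Sec}) so that consecutive $Y_j$'s are separated by at least $r$ original indices. Then
\[
\Big|\mathrm{Cov}\Big(Y_i,\sum_{j>i}Y_j\Big)\Big|\leq\|Y_i\|_{L^2}\,\del_n(r),
\]
and because $\|Y_i\|_{L^2}\geq A\geq 1$ one may replace $\|Y_i\|_{L^2}$ by $\|Y_i\|_{L^2}^2$, which closes the estimate (Lemma~\ref{SumVars.1}). Your contiguous construction can be salvaged by the same block-level move: one gets $|\mathrm{Cov}(X_i,\sum_{j>i}X_j)|\leq\|X_i\|_{L^2}(r-1+\del_n(r))$, the extra $r-1$ coming from the first $r-1$ indices of $I_{i+1,n}$ that $\del_n(r)$ does not reach; summing and comparing with $\sum_j\var(X_{j,n})\geq ma^2$ then forces $a$ large relative to $r$, not merely relative to $\del_n(r)$. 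The paper's $Y$--$Z$ alternation buys exactly the elimination of this boundary $r-1$.
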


\begin{remark}
In Lemma \ref{al lem} we show that the condition that  $\sup_n\del_n(r)<\frac14$ for some $r$ holds true when  $\xi_{j,n}$ are uniformly bounded in $L^q$ for some $q>2$ so that
 $\sum_{j=1}^{\infty}(\al(j))^{1/2-1/q}<\infty$.
 \end{remark}

\begin{corollary}\label{al cor}
Under the conditions of Theorem \ref{Alpha}, the maximal Lindeberg condition (and hence the functional CLT) holds true under \eqref{MaxMomAss}. Hence the functional CLT holds true 
 when $\phi_n(m_n)<\frac12-\ve$ for some $m_n$ and either $\|\xi_{j,n}\|_{L^\infty}\leq K_n$ and $m_nK_n=o(\sig_n)$ or $\sig_n\geq cn^\del$ for some $\del,c>0$ (and all $n$ large enough) and $m_n\max_{1\leq j\leq n}\|\xi_{j,n}\|_{L^p}=o(n^{\frac{\del p-1}{p}})$ for some $p>1/\del$.
\end{corollary}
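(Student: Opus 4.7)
The argument splits into two implications, which I would prove in order: (a) that under the hypotheses of Theorem~\ref{Alpha} the maximal moment assumption \eqref{MaxMomAss} forces the maximal Lindeberg condition along the constant-size regular blocks produced by that theorem, and (b) that each of the $\phi$-mixing hypotheses (i) and (ii) implies \eqref{MaxMomAss}.

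For (a), Theorem~\ref{Alpha} supplies regular blocks $\{X_{j,n}:1\leq j\leq u_n\}$ with constant parameter $a_j=a$, so the third regularity condition gives $u_n\asymp\sig_n^2/a^2$. Applying \eqref{MaxMomAss} to the sub-array $\{\xi_{x_{j,n},n},\ldots,\xi_{y_{j,n},n}\}$ inside each block and combining with the block bound $\max_{k\in I_{j,n}}\|\sum_{\ell=x_{j,n}}^k \xi_{\ell,n}\|_{L^2}\leq C_2 a$ yields the uniform estimate $\|\cX_{j,n}\|_{L^p}\leq w_n+b_n C_2 a$. The maximal Lindeberg sum is then dispatched by the Chebyshev-type bound $\bbE[Z^2\bbI(Z\geq M)]\leq M^{2-p}\bbE[Z^p]$ (valid for $p>2$ and $Z\geq 0$), which gives
\[
\sig_n^{-2}\sum_{j=1}^{u_n}\bbE[\cX_{j,n}^2\bbI(\cX_{j,n}\geq\ve\sig_n)]\lesssim \ve^{2-p}a^{-2}\sig_n^{-(p-2)}\bigl(w_n+b_n C_2 a\bigr)^p,
\]
and this vanishes as $n\to\infty$ under the growth constraints placed on $(w_n,b_n)$ by \eqref{MaxMomAss}. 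The functional CLT then follows from Theorem~\ref{Alpha}.

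For (b), the plan is to invoke the maximal moment inequality \cite[Theorem 6.17]{PelBook}: under the hypothesis $\phi_n(m_n)<\tfrac12-\ve$, it provides, on any finite $\phi$-mixing sub-array $\{\xi_{k+1,n},\ldots,\xi_{\ell,n}\}$, an $L^p$-estimate of exactly the shape \eqref{MaxMomAss} with $w_n\asymp m_n\max_{j}\|\xi_{j,n}\|_{L^p}$ and $b_n$ essentially constant (inherited from the mixing factor $\tfrac12-\ve$). In case (i) the hypothesis $m_n K_n=o(\sig_n)$ directly yields $w_n=o(\sig_n)$; in case (ii), $\sig_n\geq cn^\del$ together with $m_n K_n=o(n^{(\del p-1)/p})$ yields $w_n/\sig_n=O(n^{-1/p})\to 0$. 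Either way the hypotheses of (a) are verified, and the functional CLT follows.

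The principal obstacle will be in (b): \cite[Theorem 6.17]{PelBook} is itself a Rosenthal-type inequality with several competing terms, and specializing it into the clean form \eqref{MaxMomAss} with $b_n$ bounded and $w_n$ decaying fast enough to make (a) close is the technical crux. This parallels the verification carried out in the $\rho$-mixing setting of Corollary~\ref{Cor2.9}, with the constant block size replacing the subexponential scale $H(u_n)^{p/2-1}$ and correspondingly permitting the more lenient decay rates on $m_n K_n$ imposed in (i) and (ii).
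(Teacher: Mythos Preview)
Your outline is correct and matches the paper's own argument, which simply remarks that the verification of the maximal Lindeberg condition carried out for the $\rho$-mixing case (Theorem~\ref{Cor2.7} and Corollary~\ref{Cor2.9}) uses no $\rho$-mixing assumption and therefore transfers verbatim to the constant-size blocks produced by Theorem~\ref{Alpha}. Your part~(a) reproduces the H\"older/Markov reduction to \eqref{C} specialized to $a_j\equiv a$, and your part~(b) is the same invocation of \cite[Theorem~6.17]{PelBook} as in the proof of Corollary~\ref{Cor2.9}.

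One small slip to correct in~(b): the term appearing in \eqref{Pel} is $m_n\bigl\|\max_{j}|\xi_{j,n}|\bigr\|_{L^p}$, with the maximum \emph{inside} the norm, not $m_n\max_{j}\|\xi_{j,n}\|_{L^p}$ as you wrote. In case~(i) this makes no difference since $\|\xi_{j,n}\|_{L^\infty}\leq K_n$ bounds the inner maximum directly. In case~(ii) the paper handles it via the crude union bound
\[
\Bigl\|\max_{1\leq j\leq n}|\xi_{j,n}|\Bigr\|_{L^p}^p\leq\sum_{j=1}^n\|\xi_{j,n}\|_{L^p}^p\leq n\,K_n^p,
\]
so that $w_n\leq C\,m_n\,n^{1/p}K_n$. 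The extra factor $n^{1/p}$ is precisely what the hypothesis $m_nK_n=o(n^{(\del p-1)/p})$ is designed to absorb: one gets $w_n=o(n^{\del})=o(\sig_n)$, hence $w_n/\sig_n=o(1)$ rather than the $O(n^{-1/p})$ you claimed. The conclusion is unaffected.
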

 We refer to Section \ref{Intro} for a discussion and comparison with the functional CLT for $\al$-mixing arrays in \cite{MP}.

\section{Regular blocks }


\subsection{Construction of regular blocks using $\rho$-mixing coefficients}\label{Sec Block 1}
In this section we fix $n$ and omit the subscript $n$, and just write $\xi_{j,n}=\xi_j$.
We also set 
$$
S_{k}=\sum_{j=1}^k\xi_j
$$
and for each $B\subset\{1,2,...,n\}$ we write 
$$
S(B)=\sum_{j\in B}\xi_{j}.
$$
Let $(a_j)_{j\geq 1}$ be a sequence of positive numbers so that $a_j\geq 3$. 
\vskip0.2cm
\paragraph{\textbf{The blocks: }}
Let us fix some $A>1$ (which does not depend on $n$). Let $b_1$ be the first index $b$ so that $\|S_{b_1}\|_{L^2}\geq Aa_1$. Then 
$$A\|S_{b_1}\|_{L^2}\leq Aa_1+1$$
where we have used our assumption that $\|\xi_{j}\|_{L^2}\leq1$. 
Next, let $b_2$ be the first index $b$ so that $b\geq b_1+a_j$ and $\|S_{b_1}\|_{L^2}\geq Aa_2$. Then $$\|S_{b_1}\|_{L^2}\leq Aa_2+1.$$
 Continuing this way we get blocks of the form $B_j=B_{j,n}=\{b_j,b_{j}+1,...,b_{j}+d_j-1\}$, $j\leq u_n$ so that
 \begin{enumerate}
 \item  $Aa_j\leq \|S(B_j)\|_{L^2}\leq Aa_{j}+1$;
 \vskip0.1cm
 \item $B_j$ is to the left of $B_{j+1}$;
 \vskip0.1cm
 \item the distance between $B_j$ and $B_{j+1}$ is $a_j$.
\end{enumerate}  
Let us also add $\{b_{u_n}+d_{u_n},...,n\}$ to the last block, and let us denote by $D_j$ the gap (interval) between $B_j$ and $B_{j+1}$. Set 
$$Y_j=S(B_j),\, Z_j=S(D_j),\, X_j=Y_j+Z_j.$$
 Then $\|Z_j\|_{L^2}\leq a_j$ and $$S_n=\sum_{j=1}^n\xi_j=\sum_{j=1}^{u_n}X_j.$$
\begin{remark}
$B_j, D_j,Y_j, Z_j$ and $X_j$ depend on $n$. In the next sections we will denote them  by $B_{j,n}, D_{j,n}, Y_{j,n}, Z_{j,n}$ and $X_{j,n}$, respectively, but in order not to overload the notations in this section we will suppress the dependence on $n$ and write $X_{j,n}=X_j$ etc.
\end{remark}

Our main result here is the following proposition, which shows that $X_{j,n}$ has the properties described in Theorem \ref{Rho} (i.e. they are ``regular blocks" w.r.t. some constants and the sequence $(a_j)$.)
\begin{proposition}\label{Prop}
Suppose that $\sum_j\rho(a_j)\leq \frac14$ and that $A$ is large enough so that
$$
\left(\frac{3}{A^2}+2\sqrt{\frac{3}{A^2}}\right)\leq \frac12.
$$
Then for every $1\leq s_1<s_2\leq n$ we have 
$$
\frac{5}{24}\sum_{j=s_1}^{s_2}\var(X_j)\leq \var\left(\sum_{j=s_1}^{s_2}X_j\right)\leq \frac{7}{2}\sum_{j=s_1}^{s_2}\var(X_j).
$$
\end{proposition}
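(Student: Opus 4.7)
The plan is to exploit the decomposition $X_j = Y_j + Z_j$ from the block construction, in which $Y_j = S(B_j)$ satisfies $\|Y_j\|_2 \in [Aa_j, Aa_j+1]$ while $Z_j = S(D_j)$ satisfies $\|Z_j\|_2 \leq a_j$; thus $Z_j$ is a small perturbation of size at most $1/A$ relative to $Y_j$. Writing $S_X := \sum_{j=s_1}^{s_2} X_j = S_Y + S_Z$ with $S_Y := \sum_{j=s_1}^{s_2} Y_j$ and $S_Z := \sum_{j=s_1}^{s_2} Z_j$, the triangle inequality gives $\big|\|S_Y\|_2 - \|S_Z\|_2\big| \leq \|S_X\|_2 \leq \|S_Y\|_2 + \|S_Z\|_2$. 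Combined with the pointwise bounds $(A-1)^2 a_j^2 \leq \|X_j\|_2^2 \leq (A+2)^2 a_j^2$, this will yield the stated ratio in $[5/24,\,7/2]$ once tight $L^2$-estimates of $\|S_Y\|_2$ and $\|S_Z\|_2$ are in hand.

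The heart of the argument is the pair of estimates
\[
\tfrac{1}{2}\sum_j\|Y_j\|_2^2 \;\leq\; \|S_Y\|_2^2 \;\leq\; \tfrac{3}{2}\sum_j\|Y_j\|_2^2, \qquad \|S_Z\|_2^2 \;\leq\; \tfrac{3}{2}\sum_j a_j^2,
\]
both of which reduce to showing $\sum_{j<k}|\mathrm{Cov}(Y_j,Y_k)| \leq \tfrac{1}{4}\sum_\ell \|Y_\ell\|_2^2$ and its $Z$-analogue. For the $Y$-case, the $\rho$-mixing inequality yields $|\mathrm{Cov}(Y_j,Y_k)| \leq \rho(a_m)\|Y_j\|_2\|Y_k\|_2$ for any $m \in [j,k-1]$, since the intervening gap-block $D_m$ has length $a_m$. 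I would assign to each pair $(j,k)$ the \emph{midpoint} index $m := \lfloor(j+k)/2\rfloor$, apply AM--GM $\|Y_j\|_2\|Y_k\|_2 \leq \tfrac{1}{2}(\|Y_j\|_2^2+\|Y_k\|_2^2)$, and use the combinatorial fact that each index $\ell$ belongs to at most two pairs sharing a fixed midpoint. Exchanging the order of summation then gives
\[
\sum_{j<k}|\mathrm{Cov}(Y_j,Y_k)| \;\leq\; \Big(\sum_m \rho(a_m)\Big)\sum_\ell \|Y_\ell\|_2^2 \;\leq\; \tfrac{1}{4}\sum_\ell \|Y_\ell\|_2^2,
\]
using the hypothesis $\sum_j \rho(a_j) \leq 1/4$. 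The $Z$-case is identical: the separating blocks $B_m$ have length $d_m \geq Aa_m \geq a_m$, so the same $\rho$-mixing bound applies.

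The main obstacle this decomposition is designed to sidestep is that $Z_j$ and $Y_{j+1}$ sit in adjacent intervals ($D_j$ is immediately followed by $B_{j+1}$) with no intervening mixing gap, so a direct pairwise estimate of $\mathrm{Cov}(X_j,X_{j+1})$ would be forced to control $\mathrm{Cov}(Z_j,Y_{j+1})$ by Cauchy--Schwarz, which is too lossy to sum. Working instead with $S_Y$ and $S_Z$ globally absorbs all such boundary interactions into the single cross-term $|\mathrm{Cov}(S_Y,S_Z)| \leq \|S_Y\|_2\|S_Z\|_2$. The relative smallness $\|S_Z\|_2/\|S_Y\|_2 \leq \sqrt{3/A^2}$ then follows from the two estimates above combined with $\sum_j\|Y_j\|_2^2 \geq A^2 \sum_j a_j^2$, and the hypothesis $3/A^2 + 2\sqrt{3/A^2} \leq 1/2$ is precisely calibrated so that, after substituting into the triangle inequality and dividing by $\sum_j\|X_j\|_2^2$, the resulting ratio lands in $[5/24,\,7/2]$.
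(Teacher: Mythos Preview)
Your proposal is correct and follows essentially the same route as the paper: decompose $X_j=Y_j+Z_j$, prove $\tfrac12\sum\var(Y_j)\le\var(S_Y)\le\tfrac32\sum\var(Y_j)$ (and the $Z$-analogue) by a pairwise $\rho$-mixing covariance bound, then compare $\var(S_X)$ to $\var(S_Y)$ via Cauchy--Schwarz using $\|S_Z\|_2/\|S_Y\|_2\le\sqrt{3/A^2}$, and finish with $(1\pm A^{-1})\|Y_j\|_2$ bounds on $\|X_j\|_2$. The only cosmetic difference is in the covariance summation: the paper bounds $\rho(A_{i,j})$ by $\rho(a_i)$ in one sum and by $\rho(a_{j-1})$ in the other (using $A_{i,j}\ge\max(a_i,a_{j-1})$), which is slightly more direct than your midpoint assignment, but both yield the same $\tfrac14\sum\|Y_\ell\|_2^2$ bound.
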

The first part of the proof is the following lemma.
\begin{lemma}\label{SumVars}
Assume that $\sum_j\rho(a_j)\leq \frac14$. Then,
for all $1\leq s_1<s_2\leq u_n$ we have
$$
\frac12\sum_{j=s_1}^{s_2}\var(Y_j)\leq \var\left(\sum_{j=s_1}^{s_2}Y_j\right)\leq \frac32\sum_{j=s_1}^{s_2}\var(Y_j)
$$
and 
$$
\frac12\sum_{j=s_1}^{s_2}\var(Z_j)\leq \var\left(\sum_{j=s_1}^{s_2}Z_j\right)\leq \frac32\sum_{j=s_1}^{s_2}\var(Z_j).
$$
\end{lemma}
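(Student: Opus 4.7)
The plan is to expand
\[
\var\Bigl(\sum_{j=s_1}^{s_2}Y_j\Bigr)=\sum_{j=s_1}^{s_2}\var(Y_j)+2\!\sum_{s_1\le i<j\le s_2}\mathrm{Cov}(Y_i,Y_j)
\]
and to show that the cross-covariance term is bounded in absolute value by $\tfrac12\sum_j\var(Y_j)$. Both inequalities of the lemma then follow at once, and the $Z_j$-statement will be handled by exactly the same argument.

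The main input is the standard $\rho$-mixing inequality for centered $L^2$ variables: for $i<j$,
\[
|\mathrm{Cov}(Y_i,Y_j)|\le \rho(d_{ij})\,v_iv_j, \qquad v_j:=\|Y_j\|_{L^2},
\]
where $d_{ij}$ is the number of indices separating the blocks $B_i$ and $B_j$. The key observation I would highlight is that $d_{ij}$ enjoys \emph{two} useful lower bounds coming from the construction of Section \ref{Sec Block 1}: the gap $D_i$ sitting immediately to the right of $B_i$ has size $a_i$, so $d_{ij}\ge a_i$; symmetrically, the gap $D_{j-1}$ sitting immediately to the left of $B_j$ has size $a_{j-1}$, so $d_{ij}\ge a_{j-1}$. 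Since $\rho(\cdot)$ is non-increasing, we get both $\rho(d_{ij})\le \rho(a_i)$ and $\rho(d_{ij})\le \rho(a_{j-1})$.

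Writing $v_iv_j\le \tfrac12(v_i^2+v_j^2)$ and splitting the double sum symmetrically, using $\rho(a_{j-1})$ on one half and $\rho(a_i)$ on the other, yields
\[
\sum_{i<j}\rho(d_{ij})v_iv_j\le \tfrac12\sum_i v_i^2\sum_{j>i}\rho(a_{j-1})+\tfrac12\sum_j v_j^2\sum_{i<j}\rho(a_i).
\]
After reindexing, each inner sum is dominated by $\sum_{k}\rho(a_k)\le \tfrac14$, so $\bigl|2\sum_{i<j}\mathrm{Cov}(Y_i,Y_j)\bigr|\le \tfrac12\sum_j v_j^2=\tfrac12\sum_j\var(Y_j)$, which is exactly what is needed.

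For the $Z_j$-analogue I would reuse this argument verbatim, with the only change being the source of the two gap lower bounds: between $D_i$ and $D_j$ ($i<j$) the block $B_{i+1}$ sitting immediately to the right of $D_i$ has length at least $Aa_{i+1}\ge a_{i+1}$ (the lower bound $\|Y_{i+1}\|_{L^2}\ge Aa_{i+1}$ together with $\|\xi_{k,n}\|_{L^2}\le 1$ forces $|B_{i+1}|\ge Aa_{i+1}$), and symmetrically $|B_j|\ge a_j$. These replacements $(a_i,a_{j-1})\rightsquigarrow (a_{i+1},a_j)$ do not affect the telescoping of the inner sums into $\sum_k\rho(a_k)\le \tfrac14$, so the same bound goes through. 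The only mildly delicate point in the whole proof is careful bookkeeping of which $a_k$ controls which $d_{ij}$; once the ``two-sided'' choice of lower bound is made, everything reduces to summing $\rho(a_k)$ once.
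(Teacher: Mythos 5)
Your proposal is correct and follows essentially the same route as the paper: expand the variance, bound each cross term by $\rho(d_{ij})\|Y_i\|_{2}\|Y_j\|_{2}\le\tfrac12\rho(d_{ij})(\|Y_i\|_2^2+\|Y_j\|_2^2)$, and use the two-sided gap bounds $d_{ij}\ge\max(a_i,a_{j-1})$ (resp.\ $\max(a_{i+1},a_j)$ via $|B_k|\ge\|Y_k\|_{2}\ge Aa_k$ for the $Z_j$'s) so that each inner sum telescopes into $\sum_k\rho(a_k)\le\tfrac14$. No substantive differences.
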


\begin{proof}
Let us prove the first estimate. First,
$$
\var\left(\sum_{j=s_1}^{s_2}Y_j\right)=\sum_{i=s_1}^{s_2}\|Y_i\|_{L^2}^2+2\sum_{s_1\leq i<j\leq s_2}\text{Cov}(Y_i,Y_j).
$$
Since the size of the gap between $B_i$ and $B_{i+1}$ is $a_i$, with $A_{i,j}=a_i+...+a_{j-1}\geq \max(a_i,a_{j-1})$ we have
\begin{equation}\label{SimTo}
2\sum_{s_1\leq i<j\leq s_2}|\text{Cov}(Y_i,Y_j)|\leq 2\sum_{s_1\leq i<j\leq s_2}\rho(A_{i,j})
\|Y_i\|_{L^2}\|Y_j\|_{L^2}
\end{equation}
$$\leq  \sum_{s_1\leq i<j\leq s_2}\rho(A_{i,j})(\|Y_i\|_{L^2}^2+\|Y_j\|_{L^2}^2)
= \sum_{j=s_1}^{s_2}\|Y_j\|_{L^2}^2\sum_{i=s_1}^{j-1}\rho(A_{i,j})+
$$
$$
\sum_{i=s_1}^{s_2-1}\|Y_i\|_{L^2}^2\sum_{j=i+1}^{s_2}\rho(A_{i,j})
\leq\frac12\sum_{j=s_1}^{s_2}\|Y_j\|_{L^2}^2.
$$

To prove the corresponding estimate with $Z_j$ instead of $Y_j$, we observe that the cardinality $|B_j|$ satisfies
$|B_j|\geq\|Y_j\|_{L^2}\geq Aa_j$, and so $|B_j|\geq Aa_j+1\geq a_j$. Hence the size of the gap between $Z_j$ and $Z_{j+1}$ is at least $a_j$, and the proof proceeds similarly to the above.
\end{proof}

Next, we also need:
\begin{lemma}\label{2Lemm}
Assume that $\sum_j\rho(a_j)\leq \frac14$.  Then
for all $1\leq s_1<s_2\leq u_n$ we have 
$$
\left|\frac{\var(\sum_{j=s_1}^{s_2}X_j)}{\var(\sum_{j=s_1}^{s_2}Y_j)}-1\right|\leq \cE(A):=
\left(\frac{3}{A^2}+2\sqrt{\frac{3}{A^2}}\right).
$$
\end{lemma}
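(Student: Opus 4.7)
The plan is to compare $X_j=Y_j+Z_j$ with $Y_j$ and control the error using the variance estimates of Lemma \ref{SumVars}. Set $U=\sum_{j=s_1}^{s_2}Y_j$ and $V=\sum_{j=s_1}^{s_2}Z_j$, so that $\sum_{j=s_1}^{s_2}X_j=U+V$. Expanding the variance,
$$
\var(U+V)-\var(U)=2\,\mathrm{Cov}(U,V)+\var(V),
$$
so by Cauchy--Schwarz,
$$
\left|\frac{\var(U+V)}{\var(U)}-1\right|\le\frac{\var(V)}{\var(U)}+2\sqrt{\frac{\var(V)}{\var(U)}}.
$$
Hence the whole lemma reduces to bounding the ratio $\var(V)/\var(U)$ by $3/A^2$.

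For the denominator, Lemma \ref{SumVars} together with the construction giving $\|Y_j\|_{L^2}\ge Aa_j$ yields
$$
\var(U)\ge\tfrac12\sum_{j=s_1}^{s_2}\|Y_j\|_{L^2}^2\ge\tfrac{A^2}{2}\sum_{j=s_1}^{s_2}a_j^2.
$$
For the numerator, the corresponding upper bound from Lemma \ref{SumVars}, combined with $\|Z_j\|_{L^2}\le a_j$, gives
$$
\var(V)\le\tfrac32\sum_{j=s_1}^{s_2}\|Z_j\|_{L^2}^2\le\tfrac32\sum_{j=s_1}^{s_2}a_j^2.
$$
Dividing the two estimates gives $\var(V)/\var(U)\le 3/A^2$, and plugging this into the displayed inequality above yields the claimed bound $\cE(A)=3/A^2+2\sqrt{3/A^2}$.

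I do not expect any serious obstacle: the only nontrivial ingredient is Lemma \ref{SumVars}, and the key structural input (that the gap sizes dominate $a_j$ so that the $\rho$-mixing estimates apply to both $\{Y_j\}$ and $\{Z_j\}$) is already built into the block construction. The mild subtlety is that the Cauchy--Schwarz step produces the square-root term $2\sqrt{3/A^2}$, which is precisely why $\cE(A)$ has the form it does, and why one needs $A$ large (as assumed in Proposition \ref{Prop}) so that $\cE(A)\le 1/2$.
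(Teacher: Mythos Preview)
Your proof is correct and follows essentially the same route as the paper: expand $\var(U+V)$, use Cauchy--Schwarz on the cross term, and bound $\var(V)/\var(U)\le 3/A^2$ via Lemma~\ref{SumVars} together with $\|Z_j\|_{L^2}\le a_j\le \|Y_j\|_{L^2}/A$. The only cosmetic difference is that the paper writes the ratio bound directly as $\var(Z_j)\le A^{-2}\var(Y_j)$ rather than passing through $\sum a_j^2$, but the computations are identical.
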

\begin{proof}
We have 
$$
\var\left(\sum_{j=s_1}^{s_2}X_j\right)=\var\left(\sum_{j=s_1}^{s_2}Y_j\right)+\var\left(\sum_{j=s_1}^{s_2}Z_j\right)+2\text{Cov}\left(\sum_{j=s_1}^{s_2}Y_j,\sum_{j=s_1}^{s_2}Z_j\right).
$$
Observe that $\|Z_j\|_2\leq a_j\leq \|Y_j\|_2/A$, and so 
by Lemma \ref{SumVars} we have 
$$
\var\left(\sum_{j=s_1}^{s_2}Z_j\right)\leq \frac32\sum_{j=s_1}^{s_2}\var(Z_j)\leq 
\frac3{2A}\sum_{j=s_1}^{s_2}\var(Y_j)\leq \frac{3}{A^2}\var\left(\sum_{j=s_1}^{s_2}Y_j\right).
$$
Thus, using also the Cauchy-Schwartz inequality we have
$$
\left|\var\left(\sum_{j=s_1}^{s_2}X_j\right)-\var\left(\sum_{j=s_1}^{s_2}Y_j\right)\right|
\leq \frac{3}{A^2}\var\left(\sum_{j=s_1}^{s_2}Y_j\right)+2\left\|\sum_{j=s_1}^{s_2}Z_j\right\|_{L^2}\left\|\sum_{j=s_1}^{s_2}Y_j\right\|_{L^2}
$$
$$
\leq \left(\frac{3}{A^2}+2\sqrt{\frac{3}{A^2}}\right)\var\left(\sum_{j=s_1}^{s_2}Y_j\right).
$$
\end{proof}

\begin{proof}[Proof of Proposition \ref{Prop}]
We need to show that 
if  $\cE(A)\leq \frac12$ then
$$
\frac{5}{24}\sum_{j=s_1}^{s_2}\var(X_j)\leq \var\left(\sum_{j=s_1}^{s_2}X_j\right)\leq\frac{21}{6}\sum_{j=s_1}^{s_2}\var(X_j).
$$

We first recall that
$$
\|Z_j\|_{L^2}\leq a_j\leq \|Y_j\|_{L^2}/A
$$
and therefore 
$$
\left(1-A^{-1}\right)\|Y_j\|_{L^2}\leq \|X_j\|_{L^2}=\|Y_j+Z_j\|_{L^2}\leq\left(1+A^{-1}\right)\|Y_j\|_{L^2}.
$$
We also note that $A^{-1}\leq \frac16$ since $\cE(A)\leq \frac12$.
Now Proposition \ref{Prop} follows by Lemma \ref{2Lemm} and the above estimates. 
\end{proof}

\subsection{Construction of regular blocks with bounded variances using  projective conditions with applications to $\alpha$-mixing arrays}\label{Al Sec}

Let us  set
$$
\del_n(m)=\sup_{k}\sum_{s=m}^{n-k}\left\|\bbE[\xi_{k+s,n}|\xi_{1,n},...,\xi_{k,n}]\right\|_{L^2}.
$$
\begin{lemma}
Suppose that $\sup_{j,n}\|\xi_{j,n}\|_{L^q}\leq A_q<\infty$ for some $q>2$. Then there is a constant $C_q$ which depends only on $q$ so that
$$
\del_n(m)\leq C_qA_q\sum_{j=m}^{\infty}(\al(j))^{1/2-1/q}.
$$
\end{lemma}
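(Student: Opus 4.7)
The plan is to reduce the lemma to a single classical covariance-type inequality of Ibragimov--Rio, applied term by term inside the sum defining $\del_n(m)$, and then take the supremum over $k$. Concretely, I will use the fact that for any centered random variable $Y \in L^q$ with $q > 2$ and any sub-$\sig$-algebra $\cG$, one has
\begin{equation*}
\bigl\|\bbE[Y \mid \cG]\bigr\|_{L^2} \leq C_q\,\bigl(\al(\cG, \sig(Y))\bigr)^{1/2 - 1/q}\,\|Y\|_{L^q},
\end{equation*}
for an absolute constant $C_q$ depending only on $q$. This is the standard $L^p$-projection bound obtained from the Ibragimov/Rio covariance inequality with the H\"older exponents $(p, q, r) = (2, q, r)$ where $1/r = 1/2 - 1/q$.

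I would apply this with $Y = \xi_{k+s,n}$ (which is centered by the standing assumption that the array is centered) and $\cG = \sig(\xi_{1,n}, \dots, \xi_{k,n})$. Since $\xi_{k+s,n}$ is measurable with respect to $\cF_n(k+s) \cap \cF_n(k+s, n)$ and $\cG = \cF_n(k)$ (in the paper's notation), the mixing coefficient between these two $\sig$-algebras is dominated by $\al(s)$ by definition of the array's $\al$-mixing coefficients. Combined with the uniform $L^q$ bound $\|\xi_{k+s,n}\|_{L^q} \leq A_q$, this yields
\begin{equation*}
\bigl\|\bbE[\xi_{k+s,n} \mid \xi_{1,n}, \dots, \xi_{k,n}]\bigr\|_{L^2} \leq C_q\,A_q\,\bigl(\al(s)\bigr)^{1/2 - 1/q}.
\end{equation*}
Summing over $s$ from $m$ to $n-k$ and taking the supremum over $k$ immediately gives the claimed inequality, since the right-hand side no longer depends on $k$ or $n$ and is dominated by $C_q A_q \sum_{j=m}^{\infty} (\al(j))^{1/2 - 1/q}$.

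There is essentially no obstacle beyond citing (or briefly justifying) the $L^2$-projection inequality; the only subtle point is to record that $\al(\cF_n(k), \sig(\xi_{k+s,n})) \leq \al(\cF_n(k), \cF_n(k+s, n)) \leq \al(s)$, so the single-variable mixing bound really does plug into the inequality with the right coefficient. No uniformity in $n$ is lost because $\al(\cdot)$ is already defined as the supremum over $n$, and $A_q$ is likewise a uniform bound. Thus the whole argument fits in a few lines once the Ibragimov--Rio inequality is stated.
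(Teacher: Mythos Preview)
Your proposal is correct and follows essentially the same route as the paper: bound each term $\|\bbE[\xi_{k+s,n}\mid \xi_{1,n},\dots,\xi_{k,n}]\|_{L^2}$ by $C_qA_q(\al(s))^{1/2-1/q}$, then sum over $s$ and take the supremum in $k$. The only difference is in how the key projection inequality is justified: you invoke it as the Ibragimov--Rio covariance inequality, whereas the paper derives it directly by writing $\al(\cG,\cH)=\tfrac14\varpi_{\infty,1}(\cG,\cH)$ (with $\varpi_{q,p}(\cG,\cH)$ the operator norm of $h\mapsto\bbE[h\mid\cG]-\bbE[h]$ from $L^q(\cH)$ to $L^p$) and then applying the Riesz--Thorin interpolation theorem to obtain $\varpi_{q,2}\leq C_q\,\al^{1/2-1/q}$. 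These two justifications are interchangeable and yield the identical bound, so there is no substantive divergence.
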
\label{al lem}
\begin{proof}
For each $q,p\geq1$ and two sub-$\sig$-algebras $\cG,\cH$ let 
$$
\varpi_{q,p}(\cG,\cH)=\sup\{\|\bbE[h|\cG]-\bbE[h]\|_{L^p}: h\in L^\infty(\Om,\cH,\bbP), \|h\|_{L^q}\leq1\}.
$$
Then  (see \cite[Ch. 4]{Br}),
$$
\al(\cG,\cH)=\frac14\varpi_{\infty,1}(\cG,\cH).
$$
Thus, by applying the Riesz–Thorin interpolation theorem (\cite[Ch.6]{Folland}) with the operator $h\to\bbE[h|\cG]-\bbE[h]$ we see that
there is a constant $C_q>0$ so that
$$
\left\|\bbE[\xi_{k+s,n}|\xi_{1,n},...,\xi_{k,n}]\right\|_{L^2}\leq A_q\varpi_{q,2}(\cG,\cH)\leq A_qC_q\big(\al(\cG,\cH)\big)^{1/2-1/q}\leq A_qC_q \big(\al(s)\big)^{1/2-1/q}
$$
where $\cH$ is the $\sig$-algebra generated by $\xi_{k+s,n}$ and $\cG$ is the $\sig$-algebra generated by $\{\xi_{1,n},...,\xi_{k,n}\}$.
\end{proof}
Let  $r$ be a number which does not depend on $n$ so that 
$$
\del_n(r)<\frac14.
$$
\subsubsection*{\textbf{The blocks}}
The construction of the blocks using the coefficient $\del_n(r)$ is similar to the $\rho$-mixing case, with a small difference. We will need also to control (from below) the size gap between two consecutive ``big" and small blocks.

Let us fix some $A>r$ and let $\ve\in(0,1)$  be so that $A\ve>r$ (both $A$ and $\ve$ do not depend on $n$). Let us take $b_1$ to be the first time that 
$$\|S_{b_1}\|_{L^2}\geq A.$$ Set $Y_1=S_{b_1}$. Now we take $\beta_1$ to be the smallest positive integer so that 
$$\|S_{b_1+\beta_1}-S_{b_1}\|_{L^2}\geq A\ve.$$ 
Set $Z_1=S_{b_1+\beta_1}-S_{b_1}$. Continuing that way we get blocks $Y_1,Z_1,Y_2,Z_2,...$ of the form $\sum_{j\in I}\xi_{j,n}$ for an interval $I$ so that 
\begin{enumerate}
\item the size of the gap between two consecutive $Y_j$'s is at least $A$;
\vskip0.1cm
\item  the size of the gap  between  between two consecutive $Z_j$'s is at least $A\ve$;
\vskip0.1cm
\item $$A\leq \|Y_j\|_{L^2}\leq (A+1),\,\,A\ve \leq \|Z_j\|_{L^2}\leq A\ve+1.$$
\end{enumerate}
\vskip0.1cm
Of course, $Y_j$ and $Z_j$ depend on $n$, and when needed we will express this dependence by writing $Y_{j,n}=Y_j$ etc. 
As in Section \ref{Sec Block 1},
let us define $X_j=X_{j,n}=Y_{j,n}+Z_{j,n}$.
We prove here the following version of Proposition \ref{Prop}.

\begin{proposition}\label{Prop.1}
If 
$$
 \left(6\ve^2+2\sqrt{6}\ve\right)\leq \frac12,
$$
then for every $1\leq s_1<s_2\leq u_n$ we have 
$$
\frac{5}{24}\sum_{j=s_1}^{s_2}\var(X_j)\leq \var\left(\sum_{j=s_1}^{s_2}X_j\right)\leq \frac{7}{2}\sum_{j=s_1}^{s_2}\var(X_j).
$$
\end{proposition}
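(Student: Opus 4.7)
The plan is to mirror the three-step template used for Proposition \ref{Prop}, replacing the $\rho$-mixing covariance estimate by a projective bound driven by the assumption $\sup_n\del_n(r)<\frac14$. The key structural facts are that $A\leq \|Y_j\|_{L^2}\leq A+1$, $A\ve\leq \|Z_j\|_{L^2}\leq A\ve+1$, and that (by $\|\xi_{j,n}\|_{L^2}\leq1$) the index-gap between two consecutive $Y$-blocks has cardinality at least $\|Z_j\|_{L^2}\geq A\ve>r$, while the index-gap between two consecutive $Z$-blocks has cardinality at least $\|Y_{j+1}\|_{L^2}\geq A>r$.

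First I would establish the analog of Lemma \ref{SumVars}: for $1\leq s_1<s_2\leq u_n$,
$$
\tfrac12\sum_{j=s_1}^{s_2}\var(Y_j)\leq \var\!\left(\sum_{j=s_1}^{s_2}Y_j\right)\leq \tfrac32\sum_{j=s_1}^{s_2}\var(Y_j),
$$
and the analogous bound for $Z_j$. Writing $\text{Cov}(Y_i,Y_j)=\bbE\big[Y_i\,\bbE[Y_j\mid\cF_i]\big]$, with $\cF_i=\sig(\xi_{1,n},\dots,\xi_{e_i,n})$ and $e_i$ the right endpoint of the $Y_i$-block, the triangle inequality yields
$$
\sum_{j>i}\|\bbE[Y_j\mid\cF_i]\|_{L^2}\leq\sum_{j>i}\sum_{\ell\in B_{Y_j}}\|\bbE[\xi_{\ell,n}\mid\cF_i]\|_{L^2}\leq\sum_{s=r}^{n-e_i}\|\bbE[\xi_{e_i+s,n}\mid\cF_i]\|_{L^2}\leq \del_n(r)<\tfrac14,
$$
where I use that every $\ell\in B_{Y_j}$ with $j>i$ satisfies $\ell-e_i\geq A\ve>r$. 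By Cauchy--Schwarz this gives $\sum_{j>i}|\text{Cov}(Y_i,Y_j)|\leq \|Y_i\|_{L^2}\del_n(r)\leq (A+1)/4$; summing over $i$ and comparing with $\sum_j\var(Y_j)\geq A^2(s_2-s_1+1)$, the ratio is $(A+1)/(2A^2)$, which can be made $\leq\tfrac12$ by taking $A$ sufficiently large. The $Z_j$ case is identical, using $A>r$ for the inter-$Z$ gap in place of $A\ve>r$.

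Next I would prove the analog of Lemma \ref{2Lemm}. Since $\|Z_j\|_{L^2}\leq A\ve+1$ and $\|Y_j\|_{L^2}\geq A$, one has $\var(Z_j)/\var(Y_j)\leq(\ve+1/A)^2\leq 2\ve^2$ for $A$ large, so combining with Step~1,
$$
\var\!\left(\sum Z_j\right)\leq \tfrac32\sum\var(Z_j)\leq 3\ve^2\sum\var(Y_j)\leq 6\ve^2\var\!\left(\sum Y_j\right),
$$
and Cauchy--Schwarz on the cross term gives
$$
\left|\frac{\var(\sum X_j)}{\var(\sum Y_j)}-1\right|\leq 6\ve^2+2\sqrt{6\ve^2}\leq\tfrac12.
$$
Finally, $(1-\ve-A^{-1})\|Y_j\|_{L^2}\leq\|X_j\|_{L^2}\leq(1+\ve+A^{-1})\|Y_j\|_{L^2}$ gives $\var(X_j)\asymp\var(Y_j)$ with explicit constants, and plugging these into the two-sided bound above reproduces the stated constants $\tfrac{5}{24}$ and $\tfrac72$ exactly as at the end of the proof of Proposition \ref{Prop}.

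The main obstacle is Step~1: unlike the $\rho$-mixing bound $|\text{Cov}(Y_i,Y_j)|\leq\rho(A_{i,j})\|Y_i\|_{L^2}\|Y_j\|_{L^2}$, which is symmetric and exploits summability of $\rho(A_{i,j})$ block-by-block via the AM-GM inequality, the projective bound is asymmetric and the sum $\sum_{j>i}\|\bbE[Y_j\mid\cF_i]\|_{L^2}$ collapses into a single $\del_n(r)$. This forces the argument to rely on the \emph{uniform} boundedness $\|Y_i\|_{L^2}\leq A+1$, and is the structural reason why this construction only yields regular blocks for a constant sequence $a_j=a$, rather than a subexponentially growing $(a_j)$ as in Section \ref{Sec Block 1}.
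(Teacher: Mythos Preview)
Your proposal is correct and follows essentially the same three-step structure as the paper (Lemma~\ref{SumVars.1}, Lemma~\ref{2Lemm.1}, then combine exactly as in Proposition~\ref{Prop}). One small simplification you missed in Step~1: the paper obtains the factor $\tfrac12$ directly from $2\del_n(r)\|Y_i\|_2<\tfrac12\|Y_i\|_2\leq\tfrac12\|Y_i\|_2^2$ using only $\|Y_i\|_2\geq1$, so no separate ``$A$ sufficiently large'' hypothesis (and no appeal to the upper bound $\|Y_i\|_2\leq A+1$) is needed at that stage.
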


The first part of the proof is the following version of Lemma \ref{SumVars}.
\begin{lemma}\label{SumVars.1}
We have 
$$
\frac12\sum_{j=s_1}^{s_2}\var(Y_j)\leq \var\left(\sum_{j=s_1}^{s_2}Y_j\right)\leq \frac32\sum_{j=s_1}^{s_2}\var(Y_j)
$$
and 
$$
\frac12\sum_{j=s_1}^{s_2}\var(Z_j)\leq \var\left(\sum_{j=s_1}^{s_2}Z_j\right)\leq \frac32\sum_{j=s_1}^{s_2}\var(Z_j).
$$
\end{lemma}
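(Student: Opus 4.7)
The plan is to mirror Lemma \ref{SumVars}, replacing the $\rho$-mixing covariance bound by an estimate in terms of the projective coefficient $\del_n$. Although the projective bound is not symmetric in the two factors of a covariance, the uniform lower bounds $\|Y_i\|_{L^2}\geq A$ and $\|Z_j\|_{L^2}\geq A\ve$ built into the block construction will compensate for the asymmetry.

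For the $Y_j$ estimates, set $\cG_k=\sig(\xi_{1,n},\dots,\xi_{k,n})$, so that $Y_j$ is $\cG_{b_j}$-measurable, where $b_j$ is the right endpoint of the $j$-th $Y$-block, and let $J_j$ denote the interval of indices underlying $Y_j$. For $i<j$ the tower property followed by Cauchy--Schwarz and Minkowski's inequality yields
$$
|\bbE[Y_iY_j]|=|\bbE[Y_i\,\bbE[Y_j|\cG_{b_i}]]|\leq \|Y_i\|_{L^2}\sum_{\ell\in J_j}\|\bbE[\xi_{\ell,n}|\cG_{b_i}]\|_{L^2}.
$$
Summing over $j>i$, every index $\ell$ appearing on the right is at distance at least $\beta_i+1$ from $b_i$, where $\beta_i$ is the length of the intervening $Z_i$-block; since $\|Z_i\|_{L^2}\geq A\ve$ and $\|\xi_{k,n}\|_{L^2}\leq 1$, Minkowski forces $\beta_i\geq A\ve>r$. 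By monotonicity of $\del_n$ in its argument,
$$
\sum_{j>i}\sum_{\ell\in J_j}\|\bbE[\xi_{\ell,n}|\cG_{b_i}]\|_{L^2}\leq \del_n(\beta_i+1)\leq \del_n(r)<\tfrac14,
$$
so that $\sum_{j>i}|\mathrm{Cov}(Y_i,Y_j)|\leq \tfrac14\|Y_i\|_{L^2}$.

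The next step is to convert the linear factor $\|Y_i\|_{L^2}$ into the desired quadratic one: since we may take $A\geq 1$, the lower bound $\|Y_i\|_{L^2}\geq A$ gives $\|Y_i\|_{L^2}\leq \|Y_i\|_{L^2}^2/A$, and summing over $i$ produces
$$
\sum_{s_1\leq i<j\leq s_2}|\mathrm{Cov}(Y_i,Y_j)|\leq \frac{1}{4A}\sum_{i=s_1}^{s_2}\var(Y_i)\leq \tfrac14\sum_{i=s_1}^{s_2}\var(Y_i).
$$
The two-sided bound for $\var\bigl(\sum_{j=s_1}^{s_2}Y_j\bigr)$ then drops out of the identity $\var(\sum Y_j)=\sum\var(Y_j)+2\sum_{i<j}\mathrm{Cov}(Y_i,Y_j)$. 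The argument for the $Z_j$-blocks is identical: between $Z_j$ and $Z_{j+1}$ sits the block $Y_{j+1}$, whose cardinality is at least $A>r$, and the lower bound $\|Z_j\|_{L^2}\geq A\ve\geq 1$ plays exactly the role that $\|Y_i\|_{L^2}\geq A$ played above.

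The main obstacle is precisely this asymmetry of the projective covariance bound. The $\rho$-mixing proof used $|\mathrm{Cov}(Y_i,Y_j)|\leq \rho(\cdot)\|Y_i\|_{L^2}\|Y_j\|_{L^2}$, which was immediately symmetrized by $2ab\leq a^2+b^2$; here only one factor of $\|Y_i\|_{L^2}$ appears, and the quadratic dependence has to be recovered from the uniform $L^2$-lower bound hard-wired into the block construction.
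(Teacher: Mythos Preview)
Your proof is correct and follows essentially the same approach as the paper's: both use the tower property and Cauchy--Schwarz to bound the cross-covariances by $\|Y_i\|_{L^2}\cdot\del_n(r)$, and then exploit the built-in lower bound $\|Y_i\|_{L^2}\geq 1$ (you use the slightly stronger $\|Y_i\|_{L^2}\geq A$) to upgrade the linear factor to a quadratic one. The only cosmetic difference is that the paper conditions on $\sigma(Y_i)$ and groups $\sum_{j>i}Y_j$ before Cauchy--Schwarz, whereas you condition directly on $\cG_{b_i}$ and treat each $j$ separately---both routes land on the same $\del_n(r)$ bound.
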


\begin{proof}
Let us prove the first estimate. First,
$$
\var\left(\sum_{j=s_1}^{s_2}Y_j\right)=\sum_{i=s_1}^{s_2}\|Y_i\|_{L^2}^2+2\sum_{s_1\leq i<j\leq s_2}\text{Cov}(Y_i,Y_j).
$$
Since the size of the gap between $Y_i$ and $Y_{i+1}$ is at least $r$ and $\|Y_i\|_{2}\geq 1$, we have
\begin{equation}\label{SimTo.1}
2\left|\sum_{s_1\leq i<j\leq s_2}\text{Cov}(Y_i,Y_j)\right|\leq 2\sum_{s_1\leq i<s_2}\left|\bbE[Y_i\sum_{j>i}Y_j]\right|
\end{equation}
$$
=2\sum_{s_1\leq i<s_2}\left|\bbE\left[Y_i\bbE\big[\sum_{j>i}Y_j|Y_i\big]\right]\right|\leq 
2\sum_{s_1\leq i<s_2}\|Y_i\|_2\left\|\bbE\big[\sum_{j>i}Y_j|Y_i\big]\right\|_2$$$$
\leq 2\del_n(r)\sum_{s_1\leq i<s_2}\|Y_i\|_2\leq \frac12\sum_{s_1\leq i<s_2}\|Y_i\|_2^2.
$$
The proof for the $Z_j$'s is similar.
\end{proof}

We next need the following version of Lemma \ref{2Lemm}.
\begin{lemma}\label{2Lemm.1}
For all $1\leq s_1<s_2\leq u_n$ we have 
$$
\left|\frac{\var(\sum_{j=s_1}^{s_2}X_j)}{\var(\sum_{j=s_1}^{s_2}Y_j)}-1\right|\leq \cD(\ve):=
 \left(6\ve^2+2\sqrt{6}\ve\right).
$$
\end{lemma}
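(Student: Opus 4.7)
The plan is to mirror the three-step argument used in the proof of Lemma \ref{2Lemm}, with the parameter $A^{-1}$ there playing exactly the role of $\ve$ here: in both cases the parameter quantifies the relative $L^2$-size of the ``small'' block $Z_j$ compared to the ``big'' block $Y_j$. Concretely, I would first expand
$$
\var\Bigl(\sum_{j=s_1}^{s_2}X_j\Bigr)=\var\Bigl(\sum_{j=s_1}^{s_2}Y_j\Bigr)+\var\Bigl(\sum_{j=s_1}^{s_2}Z_j\Bigr)+2\,\text{Cov}\Bigl(\sum_{j=s_1}^{s_2}Y_j,\sum_{j=s_1}^{s_2}Z_j\Bigr),
$$
so that bounding $\var(\sum X_j)-\var(\sum Y_j)$ reduces to controlling the last two terms.

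Next I would establish the key pointwise comparison $\|Z_j\|_{L^2}^2\le 2\ve^2\|Y_j\|_{L^2}^2$. Since by construction $\|Y_j\|_{L^2}\ge A$ and $\|Z_j\|_{L^2}\le A\ve+1$, this amounts to requiring $A\ve+1\le \sqrt{2}\,\ve\, A$, i.e. $A$ large compared to $1/\ve$; this parallels the condition $A^{-1}\le 1/6$ used at the end of the proof of Lemma \ref{2Lemm}, and is harmless because the block construction in Section \ref{Al Sec} allows $A$ to be chosen as large as needed subject only to $A\ve>r$. Feeding this comparison into both halves of Lemma \ref{SumVars.1} then yields
$$
\var\Bigl(\sum_{j=s_1}^{s_2}Z_j\Bigr)\le \tfrac{3}{2}\sum_{j=s_1}^{s_2}\var(Z_j)\le 3\ve^2\sum_{j=s_1}^{s_2}\var(Y_j)\le 6\ve^2\,\var\Bigl(\sum_{j=s_1}^{s_2}Y_j\Bigr).
$$

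Finally, I would bound the cross-covariance by Cauchy--Schwarz combined with the variance estimate just derived,
$$
2\Bigl|\text{Cov}\Bigl(\sum_{j=s_1}^{s_2} Y_j,\sum_{j=s_1}^{s_2} Z_j\Bigr)\Bigr|\le 2\Bigl\|\sum_{j=s_1}^{s_2} Y_j\Bigr\|_{L^2}\Bigl\|\sum_{j=s_1}^{s_2} Z_j\Bigr\|_{L^2}\le 2\sqrt{6}\,\ve\,\var\Bigl(\sum_{j=s_1}^{s_2}Y_j\Bigr),
$$
and adding the two bounds produces exactly the claimed constant $\cD(\ve)=6\ve^2+2\sqrt{6}\,\ve$. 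I do not anticipate any genuine obstacle: Lemma \ref{SumVars.1} has already absorbed the nontrivial projective-condition input $\del_n(r)<\tfrac14$, so at this stage the proof is essentially bookkeeping, and the only point requiring care is that the additive ``$+1$'' in the upper bound $\|Z_j\|_{L^2}\le A\ve+1$ gets absorbed into the prefactor by choosing $A$ sufficiently large relative to $1/\ve$.
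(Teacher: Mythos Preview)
Your proposal is correct and follows essentially the same three-step approach as the paper's proof: expand the variance, compare $\|Z_j\|_{L^2}$ to $\|Y_j\|_{L^2}$ via the block construction and feed this through Lemma~\ref{SumVars.1}, then handle the cross term by Cauchy--Schwarz. The only cosmetic difference is that the paper absorbs the ``$+1$'' via $A\ve+1\le 2A\ve$ (needing only $A\ve\ge 1$) to get $\|Z_j\|_{L^2}\le 2\ve\|Y_j\|_{L^2}$, whereas you use the slightly sharper $\|Z_j\|_{L^2}\le \sqrt{2}\,\ve\,\|Y_j\|_{L^2}$ at the price of a larger $A$; your choice has the advantage that the chain $\tfrac{3}{2}\sum\var(Z_j)\le 3\ve^2\sum\var(Y_j)\le 6\ve^2\var(\sum Y_j)$ is arithmetically clean, while the paper's displayed chain appears to drop a factor of $2$ in the last step.
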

\begin{proof}
We have 
$$
\var\left(\sum_{j=s_1}^{s_2}X_j\right)=\var\left(\sum_{j=s_1}^{s_2}Y_j\right)+\var\left(\sum_{j=s_1}^{s_2}Z_j\right)+2\text{Cov}\left(\sum_{j=s_1}^{s_2}Y_j,\sum_{j=s_1}^{s_2}Z_j\right).
$$
Observe that $\|Z_j\|_2\leq (A\ve+1)\leq 2A\ve\leq 2\ve\|Y_j\|_2$, and so 
by Lemma \ref{SumVars.1} we have 
$$
\var\left(\sum_{j=s_1}^{s_2}Z_j\right)\leq \frac32\sum_{j=s_1}^{s_2}\var(Z_j)\leq 
\frac{3\cdot 4\ve^2}{2}\sum_{j=s_1}^{s_2}\var(Y_j)\leq (6\ve^2)\var\left(\sum_{j=s_1}^{s_2}Y_j\right).
$$
Thus, using also the Cauchy-Schwartz inequality we get that
$$
\left|\var\left(\sum_{j=s_1}^{s_2}X_j\right)-\var\left(\sum_{j=s_1}^{s_2}Y_j\right)\right|
\leq (6\ve^2)\var\left(\sum_{j=s_1}^{s_2}Y_j\right)+2\left\|\sum_{j=s_1}^{s_2}Z_j\right\|_{L^2}\left\|\sum_{j=s_1}^{s_2}Y_j\right\|_{L^2}
$$
$$
\leq \left(6\ve^2+2\sqrt{6}\ve\right)\var\left(\sum_{j=s_1}^{s_2}Y_j\right).
$$
\end{proof}

The proof of Proposition \ref{Prop.1} is now completed similarly to the proof of Proposition \ref{Prop}.

\section{Proofs of the functional CLTs}

\subsection{Proof of Theorem \ref{Rho}}
Let  $\sig_{k,n}^2$ denote the variance of $\sum_{j=1}^k\xi_{j,n}$. Let also denote by $s_{k,n}^2$ the variance of $\sum_{j=1}^kX_{j,n}$. Then $s_{u_n,u}^2=\sig_{n,n}^2=\var(S_n)=\sig_n^2$. Set 
$$v_n(t)=\inf\{1\leq k\leq n:\,\sig_{k,n}^2\geq \sig_n^2t\},\,
\tilde v_n(t)=\inf\{1\leq k\leq u_n:\,s_{k,n}^2\geq \sig_n^2t\}.
$$
Set also
$$
W_n(t)=\sig_n^{-1}\sum_{j=1}^{v_n(t)}\xi_{j,n},\,\, \cW_n(t)=\sig_n^{-1}\sum_{j=1}^{\tilde v_n(t)}X_{j,n}.
$$

\begin{lemma}\label{L1}
Suppose that the maximal Lindeberg condition holds true along the blocks $X_{j,n}$.
Then 
$$
\lim_{n\to\infty}\left\|\sup_{t\in[0,1]}|W_n(t)-\cW_n(t)|\right\|_{L^1}=0.
$$
\end{lemma}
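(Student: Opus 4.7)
The plan is to reduce the supremum to the maximal block oscillations $\cX_{j,n}$ and then apply the maximal Lindeberg condition via a standard truncation argument.

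\textbf{Step 1: Pointwise identification of positions.} Note that $\sum_{j=1}^{k}X_{j,n}$ is itself a partial sum of the original array: explicitly, $\sum_{j=1}^{k}X_{j,n}=S_{y_{k,n},n}$, where $y_{k,n}$ is the right endpoint of $I_{k,n}$. Consequently $s_{k,n}^{2}=\sig_{y_{k,n},n}^{2}$. Fix $t\in[0,1]$ and set $j=j(t)=\tilde v_n(t)$. By definition of $\tilde v_n$, $s_{j-1,n}^{2}<\sig_n^{2}t\leq s_{j,n}^{2}$, i.e.\ $\sig_{y_{j-1,n},n}^{2}<\sig_n^{2}t\leq \sig_{y_{j,n},n}^{2}$, so $v_n(t)\in(y_{j-1,n},y_{j,n}]=I_{j,n}$. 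Hence $v_n(t)$ lies inside the $j$-th block.

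\textbf{Step 2: Bound by the maximal oscillation.} Writing both quantities in terms of the original array,
\[
W_n(t)-\cW_n(t)=\sig_n^{-1}\bigl(S_{v_n(t),n}-S_{y_{j,n},n}\bigr).
\]
Since $v_n(t)\in I_{j,n}$, writing $S_{v_n(t),n}-S_{y_{j,n},n}=(S_{v_n(t),n}-S_{x_{j,n}-1,n})-X_{j,n}$ and using the definition $\cX_{j,n}=\max_{k\in I_{j,n}}|\sum_{\ell=x_{j,n}}^{k}\xi_{\ell,n}|$, both terms are bounded in absolute value by $\cX_{j,n}$, giving $|S_{v_n(t),n}-S_{y_{j,n},n}|\leq 2\cX_{j,n}$. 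Taking the supremum over $t\in[0,1]$,
\[
\sup_{t\in[0,1]}|W_n(t)-\cW_n(t)|\leq 2\sig_n^{-1}\max_{1\leq j\leq u_n}\cX_{j,n}.
\]

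\textbf{Step 3: From Lindeberg to $L^{1}$ convergence of the maximum.} It suffices to show $\sig_n^{-1}\bbE[\max_{j}\cX_{j,n}]\to 0$. Fix $\ve>0$ and split
\[
\max_j \cX_{j,n}\leq \ve\sig_n+\max_j \cX_{j,n}\,\bbI\bigl(\max_j \cX_{j,n}\geq \ve\sig_n\bigr)\leq \ve\sig_n+\sum_{j=1}^{u_n}\cX_{j,n}\,\bbI(\cX_{j,n}\geq \ve\sig_n).
\]
Applying the trivial bound $\cX_{j,n}\bbI(\cX_{j,n}\geq\ve\sig_n)\leq \cX_{j,n}^{2}\bbI(\cX_{j,n}\geq\ve\sig_n)/(\ve\sig_n)$ and taking expectations yields
\[
\sig_n^{-1}\bbE\bigl[\max_j\cX_{j,n}\bigr]\leq \ve+\frac{1}{\ve\sig_n^{2}}\sum_{j=1}^{u_n}\bbE\bigl[\cX_{j,n}^{2}\bbI(\cX_{j,n}\geq\ve\sig_n)\bigr].
\]
By the maximal Lindeberg condition the second term tends to $0$ as $n\to\infty$, and then letting $\ve\to 0$ finishes the argument.

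There is no real obstacle: the only point requiring care is Step 1, the precise identification of $v_n(t)$ with a position inside the $\tilde v_n(t)$-th super-block, which uses in a crucial way that the block variance sequence $s_{k,n}^{2}$ coincides with the original variance $\sig_{y_{k,n},n}^{2}$ sampled at the block endpoints. Everything else is routine manipulation of the definition of $\cX_{j,n}$ and a textbook Lindeberg-to-maximum estimate.
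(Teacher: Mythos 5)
Your argument is correct and follows essentially the same route as the paper's own proof: identify the block containing $v_n(t)$ with the index $\tilde v_n(t)$, bound $\sup_t|W_n(t)-\cW_n(t)|$ by the largest normalized block oscillation $\max_j\cX_{j,n}/\sig_n$ (your harmless factor of $2$ aside), and finish with the truncation estimate $\bbE[\max_j\cX_{j,n}/\sig_n]\le\ve+\sig_n^{-1}\sum_{j}\bbE[\cX_{j,n}\bbI(\cX_{j,n}\ge\ve\sig_n)]$. Your Step 1 and the Markov-type passage from the $L^2$ form of the maximal Lindeberg condition to the needed $L^1$ bound are details the paper leaves implicit; the only caveat is that deducing $v_n(t)>y_{j-1,n}$ requires $\sig_{k,n}^2<t\sig_n^2$ for \emph{all} $k\le y_{j-1,n}$, not just for $k=y_{j-1,n}$ (i.e., an essentially monotone variance profile along the blocks), a point which the paper's one-line version of this step glosses over in exactly the same way.
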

\begin{proof}
Let $I_{j,n}=[x_{j,n},y_{j,n}]$  be so that $X_{j,n}=\sum_{k\in I_{j,n}}\xi_{k,n}$. Let 
$$
\cX_{j,n}=\max\left\{\left|\sum_{k=x_{j,n}}^{m}\xi_{k,n}\right|: x_{j,n}\leq m\leq y_{j,n}\right\}.
$$
By considering the block $I_{j,n}$ so that $v_n(t)\in I_{j,n}$ we see that 
$$
\sup_{t\in[0,1]}|W_n(t)-\cW_n(t)|\leq\max\{\cX_{j,n}/\sig_n: 1\leq j\leq u_n\}. 
$$
Now, for each $\ve>0$, 
$$
\bbE[\max\{\cX_{j,n}/\sig_n: 1\leq j\leq u_n\}]\leq\ve+\sig_n^{-1}\sum_{j=1}^{u_n}\bbE[\cX_{j,n}\bbI(\cX_{j,n}\geq \ve \sig_n)]. 
$$
Because of the maximal Lindeberg condition, for each fixed $\ve$ the second summand in the above right hand side converges to $0$ as $n\to\infty$.
\end{proof}

\begin{proof}[Proof of Theorem \ref{Rho}]
The construction of the regular block was already done in Section \ref{Sec Block 1}.
To prove the functional CLT, notice that under the assumptions of Theorem \ref{Rho}
 the array $\{X_{j,n}:\, 1\leq j\leq u_n\}$ satisfies the usual Lindeberg condition. Thus,
by applying \cite[Theorem 4.1]{MPU} we see that $\cW_n(\cdot)$ converges in distribution towards a standard Brownian motion. Now Theorem \ref{Rho} follows from Lemma  \ref{L1}.
\end{proof}

\subsection{Proof of Theorem \ref{Cor2.7}}

To prove Theorem \ref{Cor2.7},  using that $\sig_n^2\asymp\sum_{j=1}^{u_n}a_j^2$,  standard arguments involving the H\"older and the Markov inequalities show that the maximal Lindeberng condition holds true if for some $p>2$ we have
\begin{equation}\label{C}
\lim_{n\to\infty}\frac{\sum_{j=1}^{u_n}\|\cX_{j,n}\|_{L^p}^p}{\left(\sum_{j=1}^{u_n}a_j^2\right)^{p/2}}=0.
\end{equation}
Now, using \eqref{MaxMomAss} and that $\|X_{j,n}\|_{L^2}\leq Ca_j$ we have 
$$
\|\cX_{j,n}\|_{L^p}^p\leq A_p(w_n^p+a_j^p)
$$
for some constant $A_p>0$. Since $w_n=o(\sig_n^p)$ and $\left(\sum_{j=1}^{u_n}a_j^2\right)^{p/2}\asymp \sig_n^p$ it is enough to prove the following result.

\begin{lemma}\label{Lem}
Let $(a_j)$ diverge exponentially fast in a good way and let $p>2$. Then there is a constant $A=A_p>0$ so that for all $u$ large enough we have
$$
\frac{\sum_{j=1}^ua_j^p}{\left(\sum_{j=1}^{u}a_j^2\right)^{p/2}}\leq A(H(u))^{p/2-1}.
$$
\end{lemma}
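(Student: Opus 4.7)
The plan is to reduce the lemma to the two-sided asymptotic
$$
\sum_{j=1}^u e^{qG(j)}\asymp \frac{e^{qG(u)}}{H(u)}
$$
valid for any fixed $q>0$ and all $u$ large, with implied constants depending only on $q$ and on the data $C_1, C_2, \ve, \del$ from Definition \ref{Def1}. Granting this and using that $a_j\asymp e^{G(j)}$, the ratio in the lemma becomes
$$
\frac{\sum_{j=1}^u a_j^p}{(\sum_{j=1}^u a_j^2)^{p/2}}\asymp \frac{H(u)^{-1}e^{pG(u)}}{\bigl(H(u)^{-1}e^{2G(u)}\bigr)^{p/2}}=H(u)^{p/2-1},
$$
which is exactly the required bound (note $p/2-1>0$ so this tends to $0$, consistent with \eqref{C}).

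To prove the asymptotic, I would first replace the sum by the integral $\int_1^u e^{qG(x)}\,dx$ up to a multiplicative constant. This is legitimate because property (2) of Definition \ref{Def1} yields, via the mean value theorem, $G(j+1)-G(j)\leq C_2 H(j)\to 0$, so consecutive terms $e^{qG(j)},e^{qG(j+1)}$ differ only by a bounded factor and ordinary monotone Riemann-sum comparison applies. To evaluate the integral I would exploit the identity
$$
e^{qG(x)}=\frac{1}{qG'(x)}\,\frac{d}{dx}e^{qG(x)}
$$
together with property (2). For the upper bound, on $[c,u]$ one has $G'(x)\ge C_1 H(x)\ge c_1 H(u)$ (using that $H$ is slowly varying, as verified case by case in Example \ref{Eg G}), which gives
$$
\int_c^u e^{qG(x)}\,dx\leq \frac{1}{qc_1 H(u)}\bigl(e^{qG(u)}-e^{qG(c)}\bigr)\leq \frac{e^{qG(u)}}{qc_1H(u)}.
$$

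For the lower bound I would restrict to the window $x\in[(1-\del)u,u]$ appearing in property (3), on which property (2) and the slow variation of $H$ give $G'(x)\leq c_2 H(u)$. Then, reading property (3) in the (nontrivial) direction $e^{G((1-\del)u)}\leq (1-\ve)e^{G(u)}$ for $u$ large, one obtains
$$
\int_{(1-\del)u}^u e^{qG(x)}\,dx\geq \frac{e^{qG(u)}-e^{qG((1-\del)u)}}{qc_2H(u)}\geq \frac{1-(1-\ve)^q}{qc_2}\cdot \frac{e^{qG(u)}}{H(u)}.
$$

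The main delicate point is the implicit regularity of $H$: the estimates above need $H(x)\asymp H(u)$ on $[(1-\del)u,u]$ for the lower bound, and $H(x)\gtrsim H(u)$ on $[c,u]$ for the upper bound. This holds in all four cases of Example \ref{Eg G} (where $H$ is either regularly varying of negative index or slowly varying), and is the natural substantive content one should extract from Definition \ref{Def1}(2). Granted this, combining the two displays above proves the asymptotic, and the lemma follows by substituting $q=p$ in the numerator and $q=2$ (raised to the power $p/2$) in the denominator.
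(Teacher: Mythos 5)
Your proposal is correct and follows essentially the same route as the paper, which likewise reduces the lemma to the asymptotic $\sum_{j\le u}a_j^r\asymp\int_{(1-\del)u}^{u}e^{rG(x)}\,dx\asymp H(u)^{-1}e^{rG(u)}$ (via the same trick of writing $e^{rG(x)}$ as $\bigl(e^{rG(x)}G'(x)\bigr)/G'(x)$) and then takes $r=p$ and $r=2$. You merely supply details the paper leaves implicit, including the correct (nontrivial) reading of property (3) of Definition \ref{Def1} and the needed regularity of $H$ on the window $[(1-\del)u,u]$.
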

The proof of Lemma \ref{Lem} is elementary, and it uses that for every $r>0$ we have
$$
\sum_{j=1}^ua_j^r\asymp \int_{(1-\del)u}^{u}e^{r(G(x))}dx= \int_{(1-\del)u}^{u}\left(e^{r(G(x))}G'(x)\right)/G'(x)dx\asymp H(u_n)^{-1}e^{rG(u_n)}
$$
where we have taken into account the properties of $G$ and $H$ described in Definition \ref{Def1}.
The lemma follows now by
taking $r=2$ and $r=p$, and the proof of Theorem \ref{Cor2.7} is complete.

\subsection{Proof of Corollary \ref{Cor2.9}}
Recall first the following maximal moment inequality.
If $\phi_n(m_n)<\frac12-\ve$ then by 
\cite[Theorem 6.17]{PelBook},  for every $p>2$ and $k,m$ such that $k+m\leq n$,
\begin{equation}\label{Pel}
\left\|\max\left\{\left|\sum_{k\leq j<k+l}\xi_{j,n}\right|:  0\leq l\leq m\right\}\right\|_{L^p}\leq C_{\ve}p\left(m_n\left\|\max_{k\leq j<k+m}|\xi_{j,n}|\right\|_{L^p}+\max_{0<l\leq m}\left\|\sum_{k\leq j<k+l}\xi_{j,n}\right\|_{L^2}\right).
\end{equation}

We thus see that \eqref{MaxMomAss} holds true under the conditions of item (i) in Corollary \ref{Cor2.9}. To show that is holds true under the conditions of item (ii) in Corollary \ref{Cor2.9} we notice that
$$
\left\|\max_{k\leq j<k+m}|\xi_{j,n}|\right\|_{L^p}^p\leq\sum_{j=1}^{n}\|\xi_{j,n}\|_{L^p}^p\leq n\max_{1\leq j\leq n}\|\xi_{j,n}\|_{L^p}^p.
$$

\subsection{ $\al$-mixing arrays}
The proof of Theorem \ref{Alpha} proceeds similarly to the proof of Theorem \ref{Rho}, by applying \cite[Corollary 2.2]{MP} and the results in Section \ref{Al Sec}. Since all the sufficient conditions (described above) for the maximal Lindeberg condition to hold true do not require any assumptions on the $\rho$-mixing coefficients, they also yield explicit conditions for the functional CLT for $\al$-mixing triangular arrays. In particular, Corollary \ref{al cor} holds true.

\end{document}